\definecolor{steelblue}{RGB}{70,130,180}
\newtheorem{remark}{Remark}
\newtheorem{definition}{Definition}
\newtheorem{theorem}{Theorem}
\newtheorem{corollary}{Corollary}
\DeclareMathOperator{\tr}{tr}
\DeclareMathOperator{\rk}{rank}
\newcommand{\dgp}[2]{%
\Delta(\,#1\,|\,#2\,)%
}
\newcommand*\dif{\mathop{}\!\mathrm{d}}
\title{On Submodularity and Controllability in \\Complex Dynamical Networks}
\author{Tyler H. Summers, Fabrizio L. Cortesi, and John Lygeros
\thanks{The authors are with the Automatic Control Laboratory, ETH Zurich, Switzerland, email: \{tsummers,jlygeros\}@control.ee.ethz.ch This work is partially supported by the ETH Zurich Postdoctoral Fellowship Program. }
\thanks{Preliminary versions of results in this paper appeared in \cite{summers2013optimal,cortesi2014submodularity}. Here, we present the work in a unified framework, provide modified and simplified proofs of the main results, and revise and elaborate on the numerical examples. }}
\date{\today}                                           
\begin{document}
\maketitle

\begin{abstract}
Controllability and observability have long been recognized as fundamental structural properties of dynamical systems, but have recently seen renewed interest in the context of large, complex networks of dynamical systems. A basic problem is sensor and actuator placement: choose a subset from a finite set of possible placements to optimize some real-valued controllability and observability metrics of the network. Surprisingly little is known about the structure of such combinatorial optimization problems. In this paper, we show that several important classes of metrics based on the controllability and observability Gramians have a strong structural property that allows for either efficient global optimization or an approximation guarantee by using a simple greedy heuristic for their maximization. In particular, the mapping from possible placements to several scalar functions of the associated Gramian is either a \emph{modular} or \emph{submodular set function}.  The results are illustrated on randomly generated systems and on a problem of power electronic actuator placement in a model of the European power grid.
\end{abstract}

\section{Introduction}

Controllability and observability have been recognized as fundamental structural properties of dynamical systems since the seminal work of Kalman in 1960 \cite{kalman1960contributions}, but have recently seen renewed interest in the context of large, complex networks, such as power grids, the Internet, transportation networks, and social networks. A prominent example of this recent interest is \cite{liu2011controllability}, which, based on Kalman's rank condition and the idea of structural controllability \cite{lin1974structural}, presents a graph theoretic maximum matching method to efficiently identify a minimal set of so-called driver nodes through which time-varying control inputs can move the system around the entire state space (i.e., render the system controllable). The method of \cite{liu2011controllability} is applied across a range of technological and social systems, leading to several interesting  and surprising conclusions. Using a metric of controllability given by the fraction of driver nodes in the minimal set required for complete controllability, it is shown that sparse inhomogeneous networks are difficult to control while dense homogeneous networks are easier. It is also shown that the minimum number of driver nodes is determined mainly by the degree distribution of the network. However, there is an implicit assumption about the diagonal entries of the dynamics matrix that restrict the application of their result \cite{cowan2012nodal}. Many other studies of controllability in complex networks have followed, including \cite{rajapakse2011dynamics,nepusz2012controlling,wang2012optimizing,tang2012epc,tang2012identifying}. 

One issue with the approach taken by \cite{liu2011controllability} and much of the follow-up work is that the exclusive focus on structural controllability and the associated quantitative notion of controllability (namely, the number/fraction of required driver nodes) can be rather crude and even misleading in some settings. This was noted, for example, by \cite{muller2011few} in response to the surprising result in \cite{liu2011controllability} that genetic regulatory networks seem to require many driver nodes, which apparently contradicts other findings in biological literature on cellular reprogramming. 
This suggests that rather than finding a set of driver nodes that would render a network completely controllable, a more appropriate strategy might be to choose from a finite set of possible actuator and sensor placements the subset that optimizes some real-valued controllability and observability metrics of the network. There is a variety of more sophisticated metrics for controllability and observability that have been proposed in the systems and control literature on sensor and actuator placement or selection problems in dynamical systems; see, e.g., the survey paper \cite{van2001review}. One important class of metrics involves the controllability and observability \emph{Gramians}, which are symmetric positive semidefinite matrices whose structure relates energy notions of controllability and observability. The use of Gramians as quantitative metrics of controllability in networks is studied in \cite{rajapakse2011dynamics,yan2012controlling,chapman2012system,sun2013controllability,pasqualetti2014controllability}. Other important studies of controllability in networks include \cite{sorrentino2007controllability,rahmani2009controllability,olshevsky2014minimal}.

While a variety of metrics have been proposed in the literature \cite{van2001review}, the corresponding combinatorial optimization problems for sensor and actuator placement are less well-understood. These can be solved by brute force for small problems by testing all possible placement combinations. However, for problems that arise in large networks, testing all combinations quickly becomes infeasible. Only very recently in the context of large networks have researchers started to investigate combinatorial properties of sensor and actuator placement problems for optimizing system dynamics and control metrics. 

Clark \emph{et al.} have recently considered a related but different problem of leader selection in networks with consensus dynamics, in which a set of leader states are selected to act as control inputs to the system \cite{bushnell2014supermodular,clark2011submodular,clark2012leader}. In \cite{bushnell2014supermodular,clark2011submodular}, it is shown that the minimum mean square error due to link noise is a supermodular function of the leader set. In \cite{clark2012leader} it is shown that a graph controllability index, which is related to the structural controllability framework of \cite{lin1974structural}, is a submodular function of the leader set. As discussed below, these properties allow for suboptimality guarantees using simple greedy algorithms. 




In the present paper, we show that one important class of metrics of controllability and observability, previously thought to lead to difficult combinatorial optimization problems \cite{van2001review}, can be in fact easily optimized, even for very large networks. In particular, we show that the mapping from subsets of possible actuator/sensor placements to any linear function of the associated controllability or observability Gramian has a strong structural property: it is a \emph{modular set function}. 
Furthermore, we show that the rank of the Gramian, the log determinant of the Gramian, and the negative trace of the inverse Gramian are \emph{submodular set functions}. Although maximization of submodular functions is difficult, submodularity allows for an approximation guarantee if one uses a simple greedy heuristic for their maximization \cite{greedybound}. 
We also describe how these observations define new dynamic network centrality measures for networks whose dynamics are described by linear models, assigning a control energy-related ``importance'' value to each node in the network. We illustrate the results on randomly generated systems and on a problem of power electronic actuator placement in a model of the European power grid.

The rest of the paper is organized as follows. Section II reviews the network model and Gramian-based controllability metrics. Section III introduces the notions of modular, submodular, and supermodular set functions and shows our main results that several set functions mapping possible actuator placements to various functions of the controllability Gramian are either modular or submodular. Section IV presents illustrative numerical examples. Finally, Section V gives concluding remarks and and outlook for future research.

\section{Linear Models of Network Dynamics}
This section defines a linear model for network dynamics and reviews and interprets metrics for controllability based on the controllability Gramian. The material in this section is mostly standard and can be found in many texts on linear system theory, e.g. \cite{kailath1980linear,callier1991linear}; we discuss the material mostly to set our notation. Since controllability and observability are dual properties \cite{kalman1959general}, we focus only on controllability and actuator placement; all of the results have analogous counterparts and interpretations for observability and sensor placement. 

In the literature on controllability in networks, it is common to start with linear network models. In this spirit, we focus on linear, time-invariant dynamical network models, in which the dynamics are given by
\begin{equation} \label{linearmodel}
\begin{aligned}
\dot{x}(t) &= A x(t) + B u(t), \quad x(0) = x_0, \\
         y(t) &= C x(t),
\end{aligned}
\end{equation}
where $x(t) \in \mathbf{R}^n$ are the states of the network, $u(t) \in \mathbf{R}^m $ are the control inputs that can be used to influence the network dynamics, $y(t) \in \mathbf{R}^p$ are the outputs, and $A$, $B$, and $C$ are constant matrices of appropriate dimensions. We assume that $C$ is full row rank. For example, $x(t)$ might represent voltages, currents, or frequencies in devices in a power grid, chemical species concentrations in a genetic regulatory network, or individual opinions or propensities for product adoption in a social network. 
The matrix $C \in \mathbf{R}^{p\times n}$ is typically interpreted as a set of linear state measurements, but here we will interpret it as a weight matrix whose rows define important directions in the state space.

The dynamics matrix $A$ induces a graph $\mathcal{G}(\mathcal{V},\mathcal{E})$ of the network in which the vertices correspond to states, i.e., $\mathcal{V} = \{ 1,...,n \}$ and the edges correspond to non-zero entries of $A$, i.e., $(i,j) \in \mathcal{E} $ whenever $a_{ji} \neq 0$. The non-zero entries of the input matrix $B$ describe how each actuator affects the nodes in the network. When optimizing actuator placement, one effectively designs a network structure by connecting sets of inputs to sets of network nodes to optimize a metric controllability for the resulting network. 


\subsection{Controllability}
\begin{definition}[Controllability]
A dynamical system is called \emph{controllable} over a time interval $[0,t]$ if given any states $x_0$, $x_1 \in \mathbf{R}^n$, there exists an input $u(\cdot):[0,t] \rightarrow \mathbf{R}^m$ that drives the system from $x_0$ at time $0$ to $x_1$ at time $t$. 
\end{definition}
Kalman's well-known rank condition states that a linear dynamical system is controllable if and only if $[B, AB, ..., A^{n-1} B]$ is full rank. Since rank is a generic property of a matrix, it has the same value for almost all values of the non-zero entries of $A$ and $B$ (assuming that the non-zero entries are independent). This suggests that controllability can be cast as a structural property of the graph defined by $A$ and $B$, as captured in the graph-theoretic concept of \emph{structural controllability} described by Lin in \cite{lin1974structural}, which underpins the recent results of \cite{liu2011controllability}. However, it is informative and practically relevant to consider more quantitative metrics for controllability in complex networks. 

\subsection{Energy-related controllability metrics}
Actuators in real systems are usually energy limited, so an important class of metrics of controllability deals with the amount of input energy required to reach a given state from the origin. 
The symmetric positive semidefinite matrix
\begin{equation} \label{gramint}
W_c(t) = \int_0^t e^{A \tau} B B^T e^{A^T \tau} d\tau \ \in \mathbf{R}^{n\times n}
\end{equation}
is called the \emph{controllability Gramian} at time $t$ and provides an energy-related quantification of controllability. 
Eigenvectors of $W_c$ associated with small eigenvalues (large eigenvalues of $W_c^{-1}$) define directions in the state space that are less controllable (require large input energy to reach), and eigenvectors of $W_c$ associated with large eigenvalues (small eigenvalues of $W_c^{-1}$) define directions in the state space that are more controllable (require small input energy to reach). 

For stable systems, the state transition matrix $e^{At}$ comprises decaying exponentials, so a finite positive definite limit of the controllability Gramian always exists and is given by
\begin{equation}
W_c = \int_0^\infty e^{A \tau} B B^T e^{A^T \tau} d\tau \ \in \mathbf{R}^{n\times n}.
\end{equation}
This infinite-horizon controllability Gramian can be computed by solving a Lyapunov equation
\begin{equation} \label{lyap}
A W_c + W_c A^T + B B^T = 0,
\end{equation}
which is a system of linear equations and is therefore easily solvable. Specialized algorithms have been developed to compute the solution \cite{bartels1972solution,hammarling1982numerical,li2002low} and scale to large networks.

We will focus on the infinite-horizon Gramian due to ease of computation. However, all of our results also apply to the finite-horizon Gramian, with the only disadvantage being that one must evaluate \eqref{gramint} rather than solve \eqref{lyap}, which may be more difficult for large networks. An advantage of the finite-horizon Gramian is that it can be used for unstable systems.

An alternative definition and interpretation of the Gramian for unstable systems can be used to quantify controllability \cite{balancedgramian}, but we do not discuss this interpretation further in the interest of simplicity. It is worth keeping in mind that though the results are stated for asymptotically stable systems, they apply more generally.

The controllability Gramian gives a more sophisticated energy-related quantitative picture of controllability, but we still need to form a scalar metric for $W_c$, which is a positive semidefinite matrix. We want $W_c$ ``large'' so that $W_c^{-1}$ is ``small'', requiring small amount of input energy to move around the state space. There are a number of possible metrics for the size of $W_c$, several of which we now discuss.

\subsubsection{$\mathbf{tr}(W_c)$}
The trace of the Gramian is inversely related to the average energy and can be interpreted as the average controllability in all directions in the state space. It is also closely related to the system $H_2$ norm:

\begin{equation}
\begin{aligned}
\Vert H \Vert_2^2  
			     &= \textbf{tr} \left( C \int_0^\infty e^{A t} B B^T e^{A^T t} dt C^T \right) \\
			     &= \textbf{tr} (C W_c C^T),
\end{aligned}
\end{equation}
i.e., the system $H_2$ norm is a weighted trace of the controllability Gramian.

%

\subsubsection{$\mathbf{tr}(W_c^{-1})$}
The trace of the inverse controllability Gramian is proportional to the energy needed on average to move the system around on the state space. Note that when the system is uncontrollable, the inverse Gramian does not exist and the average energy is infinite because there is at least one direction in which it is impossible to move the system using the inputs. In this case, one could consider the trace of the pseudoinverse, which is the average energy required to move the system around the controllable subspace. 


\subsubsection{$\log \det W_c$}
The determinant of the Gramian is related to the volume enclosed by the ellipse it defines
\[V(\mathcal{E}_{min}) = \frac{\pi^{n/2}}{\Gamma(n/2+1)}\sqrt[n]{\det W_c},\]
where $\Gamma$ is the Gamma function. This means that the determinant is a volumetric measure of the set of states that can be reached with one unit or less of input energy. Since determinant is numerically problematic in high dimensions, and because the logarithm is monotone, we will consider optimizing $\log \det W_c$. Note that for uncontrollable systems, the ellipsoid volume is zero, so $\log \det W_c = -\infty$. In this case, one could consider the associated volume in the controllable subspace. 



\subsubsection{$\lambda_{min} (W_c)$}

The smallest eigenvalue of the Gramian is a worst-case metric inversely related to the amount of energy required to move the system in the direction in the state space that is most difficult to control. 

\subsubsection{$\rk(W_c$)} The rank of the Gramian is the dimension of the controllable subspace.

\begin{remark}
Our main results and much of the discussion generalize straightforwardly to linear time-varying systems. The only differences are that the Gramian depends on both initial and final time, rather than just their difference, and that it must be computed by integrating \eqref{gramint}, rather than by solving a Lyapunov equation. 
\end{remark}

In the following section, we briefly review the combinatorial notion of submodularity and consider which of the above controllability metrics have a submodularity property, which provides global optimality or approximation guarantees for associated actuator selection problems.

\section{Optimal Sensor and Actuator Placement in Networks}

\subsection{Set Functions, Modularity, and Submodularity}

Sensor and actuator placement problems can be formulated as \emph{set function} optimization problems. For a given finite set $V = \{1,...,M \}$, a \emph{set function} $f: 2^V \rightarrow \mathbf{R}$ assigns a real number to each subset of $V$. In our setting, the elements of $V$ represent potential locations for the placement of actuators in a dynamical system, and the function $f$ is a metric for how controllable the system is for a given set of placements.

We consider set function optimization problems of the form
\begin{equation} \label{optprob}
 \underset{{S \subseteq V, \ |S| = k }}{\text{maximize}} \quad f(S).
\end{equation}
The problem is to select a $k$-element subset of $V$ that maximizes $f$. This is a finite combinatorial optimization problem, so one can solve it by brute force simply by  enumerating all possible subsets of size $k$, evaluating $f$ for all of these subsets, and picking the best subset. However, we are interested in cases arising from complex networks in which the number of possible subsets is very large. The number of possible subsets grows factorially as $|V|$ increases, so the brute force approach quickly becomes infeasible as $|V|$ becomes large. 


We focus instead on structural properties of the set function $f$ that make it more amenable to optimization. In particular, \emph{submodularity} plays a similar role in combinatorial optimization to convexity in continuous optimization and shares other features of concave functions \cite{lovasz1983submodular,krause2012submodular}. It occurs often in applications \cite{boykov2001interactive,kempe2003maximizing,krause2008near} (though is underexplored in systems and control theory); is preserved under various operations, allowing design flexibility; is supported by an elegant and practically useful mathematical theory; and there are efficient methods for minimizing and approximation guarantees for maximizing submodular functions. 
\begin{definition}[Submodularity]
A set function $f: 2^V \rightarrow \mathbf{R}$ is called submodular if for all subsets $A \subseteq B \subseteq V$ and all elements $s \notin B$, it holds that
\begin{equation} \label{submod1}
f(A \cup \{s\}) - f(A) \geq f(B \cup \{s\}) - f(B),
\end{equation}
or equivalently, if for all subsets $A,B \subseteq V$, it holds that
\begin{equation} \label{submod2}
f(A) + f(B) \geq f(A\cup B) + f(A\cap B).
\end{equation}
\end{definition}
Intuitively, submodularity is a diminishing returns property where adding an element to a smaller set gives a larger gain than adding one to a larger set. 
This is made precise by the following result from \cite{lovasz1983submodular}, which will be useful for verifying submodularity of set functions later.
\begin{definition}
A set function $f: 2^V \rightarrow \mathbf{R}$ is called monotone increasing if for all subsets $A, B \subseteq V$ it holds that
\begin{equation}
A \subseteq B \Rightarrow f(A) \leq f(B),
\end{equation}
and is called monotone decreasing if for all subsets $A, B \subseteq V$ it holds that
\begin{equation}
A \subseteq B \Rightarrow f(A) \geq f(B).
\end{equation}
\end{definition}
\begin{theorem}[\cite{lovasz1983submodular}] \label{theorem:mono}
A set function $f: 2^V \rightarrow \mathbf{R}$ is submodular if and only if the derived set functions $f_a : 2^{V - \{ a \} } \rightarrow \mathbf{R}$
$$f_a (X) = f(X \cup \{a\}) - f(X)  $$ 
are monotone decreasing for all $a \in V$.
\end{theorem}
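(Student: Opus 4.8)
The plan is to prove both implications by directly unwinding the definitions, observing that the monotone-decreasing property of the derived functions $f_a$ is essentially a rephrasing of the first-difference characterization \eqref{submod1} of submodularity. The key observation is that for a fixed element $a$ and sets $X \subseteq Y \subseteq V - \{a\}$, the inequality $f_a(X) \geq f_a(Y)$ reads
\begin{equation*}
f(X \cup \{a\}) - f(X) \geq f(Y \cup \{a\}) - f(Y),
\end{equation*}
which is precisely \eqref{submod1} under the substitution $A = X$, $B = Y$, and $s = a$.

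For the forward direction, I would assume $f$ is submodular and fix any $a \in V$. Given $X \subseteq Y \subseteq V - \{a\}$, since $a \notin Y$ I can apply \eqref{submod1} with $A = X$, $B = Y$, $s = a$ to obtain $f_a(X) \geq f_a(Y)$, so $f_a$ is monotone decreasing. Since $a$ was arbitrary, this gives one implication.

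For the reverse direction, I would assume each $f_a$ is monotone decreasing and verify \eqref{submod1}. Take any $A \subseteq B \subseteq V$ and $s \notin B$; then $A \subseteq B \subseteq V - \{s\}$, so the monotone-decreasing property of $f_s$ applied to $A \subseteq B$ yields $f_s(A) \geq f_s(B)$, which is exactly \eqref{submod1}. Since \eqref{submod1} and \eqref{submod2} are equivalent (as stated in the definition), $f$ is submodular.

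The argument is almost definitional once the correct form \eqref{submod1} is used, so I do not anticipate a serious obstacle. The only point requiring care is the bookkeeping of domains, namely checking that $a \notin Y$ (equivalently $s \notin B$) so that the derived functions are evaluated on legitimate subsets of $V - \{a\}$. Were one instead to start only from the lattice form \eqref{submod2}, the reverse direction would be genuinely harder: one would have to reconstruct the general inequality from marginal gains by a telescoping sum over a chain connecting $A \cap B$ to $A \cup B$, and that telescoping step would be the main work.
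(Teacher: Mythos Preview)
Your proposal is correct. The paper does not actually prove this theorem; it states it as a known result cited from \cite{lovasz1983submodular} and uses it as a tool in the proofs of Theorems~\ref{theorem:trace} and~\ref{theorem:determinant}. Your argument---observing that the monotone-decreasing property of $f_a$ on $V-\{a\}$ is literally a restatement of the diminishing-returns inequality \eqref{submod1}---is the standard one, and your bookkeeping on the domain condition $s \notin B$ is exactly the point that needs checking.
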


A set function is called \emph{supermodular} if the reversed inequalities in (\ref{submod1}) and (\ref{submod2}) hold, and is called \emph{modular} if it is both submodular and supermodular, i.e., for all subsets $A,B \subseteq V$, we have $f(A \cap B) + f(A\cup B) = f(A) + f(B)$. A modular function has the following simple, equivalent characterization \cite{lovasz1983submodular}:
\begin{theorem}[Modularity \cite{lovasz1983submodular}] \label{modularity}
A set function $f: 2^V \rightarrow \mathbf{R}$ is modular if and only if for any subset $S \subseteq V$ it can be expressed as
\begin{equation} \label{submod3}
f(S) = w(\emptyset) + \sum_{s \in S} w(s)
\end{equation}
for some weight function $w : V \rightarrow \mathbf{R}$.
\end{theorem}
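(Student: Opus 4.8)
The plan is to prove the two directions of the equivalence separately, treating the additive representation as the easy direction and the converse as the substantive one.

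First I would establish that any $f$ of the form \eqref{submod3} is modular. This reduces to the elementary observation that, for any $A, B \subseteq V$, the weight sums satisfy $\sum_{s \in A} w(s) + \sum_{s \in B} w(s) = \sum_{s \in A \cup B} w(s) + \sum_{s \in A \cap B} w(s)$, since each element of $V$ contributes the same total multiplicity to both sides: an element in $A \cap B$ is counted twice on each side, an element in exactly one of $A, B$ is counted once on each side, and an element in neither contributes nothing. Adding $2 w(\emptyset)$ and regrouping gives $f(A) + f(B) = f(A \cup B) + f(A \cap B)$, which is exactly the modularity identity. This step is routine inclusion--exclusion bookkeeping.

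The converse is where the real work lies. Assuming $f$ is modular, I would first fix the only candidate weight function by evaluating the claimed formula on $\emptyset$ and on singletons, which forces $w(\emptyset) := f(\emptyset)$ and $w(s) := f(\{s\}) - f(\emptyset)$ for each $s \in V$. It then remains to verify $f(S) = w(\emptyset) + \sum_{s \in S} w(s)$ for every $S$, which I would do by induction on $|S|$. The cases $|S| \le 1$ hold by the very definition of $w$. For the inductive step with $|S| = k \ge 2$, the key idea is to pick two distinct elements $a, b \in S$ and apply the modularity identity to $A = S \setminus \{b\}$ and $B = S \setminus \{a\}$; then $A \cup B = S$ and $A \cap B = S \setminus \{a,b\}$, so modularity yields $f(S) = f(S \setminus \{b\}) + f(S \setminus \{a\}) - f(S \setminus \{a,b\})$, expressing $f(S)$ in terms of three strictly smaller sets to which the induction hypothesis applies. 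Substituting the additive formula for each of these three terms and cancelling the redundant copies of the weights of $a$ and $b$ collapses the expression to $w(\emptyset) + \sum_{s \in S} w(s)$, closing the induction.

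The main obstacle I anticipate is precisely this inductive step of the converse: one must choose the decomposition $A, B$ so that both the union and the intersection are strictly smaller than $S$ (which forces $|S| \ge 2$ and hence the two separate base cases), and then track the weight bookkeeping carefully so that the over- and under-counting of $w(a)$ and $w(b)$ cancel exactly. Everything else is direct verification.
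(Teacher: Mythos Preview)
Your proof is correct: both directions are handled cleanly, the choice $A = S \setminus \{b\}$, $B = S \setminus \{a\}$ in the inductive step is exactly the right decomposition, and the weight bookkeeping goes through as you describe.

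However, there is nothing to compare against here. The paper does not supply its own proof of Theorem~\ref{modularity}; the result is simply stated with attribution to \cite{lovasz1983submodular} and then invoked later (in the proof of Theorem~\ref{contrmod}) as a black box. So your argument is a valid proof of a result the paper treats as known background, rather than an alternative to anything the authors wrote.
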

Modular set functions are analogous to linear functions and have the property that each element of a subset gives an independent contribution to the function value. Consequently, one can see that if $f$ is modular, the optimization problem \eqref{optprob} is easily solved by simply evaluating the set function for each individual element, sorting the result, and then choosing the top $k$ individual elements from the sorted list to obtain the best subset of size $k$. 

Maximization of monotone increasing submodular functions is NP-hard, but the so-called greedy heuristic can be used to obtain a solution that is provably close to the optimal solution \cite{greedybound}. The greedy algorithm for \eqref{optprob} starts with an empty set, $S_0 \leftarrow \emptyset$, computes the gain $\dgp{a}{S_i} = f(S_{i}\cup \{a\})-f(S_{i})$ for all elements $a\in V\backslash S_{i}$ and adds the element with the highest gain:
\[S_{i+1} \leftarrow S_{i}\cup \{\arg \max_{a} \dgp{a}{S_i}\; |\; a\in V\backslash S_{i}\}. \]
The algorithm terminates after $k$ iterations.

Performance of the greedy algorithm is guaranteed by a well known bound~\cite{greedybound}:
\begin{theorem}[\cite{greedybound}]
Let $f^*$ be the optimal value of the set function optimization problem \eqref{optprob}, and let $f(S_{greedy})$ be the value associated with the subset $S_{greedy}$ obtained from applying the greedy algorithm on \eqref{optprob}. If $f$ is submodular and monotone increasing, then 
\begin{equation}
	\frac{f^* - f(S_{greedy})}{f^* - f(\emptyset)}
	\leq \left(\frac{k-1}{k}\right)^k
	\leq \frac{1}{e}
	\approx 0.37.
\label{eq:greedy_bound}\end{equation}
\end{theorem}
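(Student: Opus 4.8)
The plan is to follow the classical argument that reduces the guarantee to a one-step ``diminishing gap'' recursion. First I would fix an optimal set $S^*$ with $|S^*| = k$ and $f(S^*) = f^*$, and denote by $S_0 = \emptyset, S_1, \ldots, S_k = S_{greedy}$ the nested sequence of sets produced by the greedy algorithm, so that $S_{i+1} = S_i \cup \{a_i^*\}$ where $a_i^*$ maximizes the marginal gain $\dgp{a}{S_i}$ over $a \in V \setminus S_i$. The aim is to control the residual gap $\delta_i := f^* - f(S_i)$ and show it contracts by a factor $(1 - 1/k)$ at each step.

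The heart of the argument is a lemma bounding the gap in terms of marginal gains: for every $i$,
\[ f^* - f(S_i) \le \sum_{a \in S^* \setminus S_i} \left[ f(S_i \cup \{a\}) - f(S_i) \right]. \]
To establish this I would start from monotonicity, which gives $f^* = f(S^*) \le f(S_i \cup S^*)$, and then telescope $f(S_i \cup S^*) - f(S_i)$ over an arbitrary ordering $a_1, \ldots, a_r$ of the elements of $S^* \setminus S_i$. Each telescoping term $f(S_i \cup \{a_1, \ldots, a_j\}) - f(S_i \cup \{a_1, \ldots, a_{j-1}\})$ is, by submodularity \eqref{submod1}, at most the marginal gain $f(S_i \cup \{a_j\}) - f(S_i)$ of adding $a_j$ to the smaller set $S_i$; summing the terms yields the stated bound.

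From this lemma the conclusion follows quickly. Since $|S^* \setminus S_i| \le k$ and the greedy choice maximizes the marginal gain, each summand is at most $f(S_{i+1}) - f(S_i)$, so $f^* - f(S_i) \le k\,[f(S_{i+1}) - f(S_i)]$. Rearranging gives $\delta_{i+1} \le (1 - 1/k)\,\delta_i$, and iterating from $\delta_0 = f^* - f(\emptyset)$ produces $\delta_k \le (1 - 1/k)^k\,\delta_0$, which is precisely the first inequality of \eqref{eq:greedy_bound}. The final bound $(1 - 1/k)^k \le 1/e$ is elementary, following from $1 - x \le e^{-x}$ evaluated at $x = 1/k$.

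I expect the main obstacle to be the telescoping lemma, specifically ensuring the submodularity inequality is applied in the correct direction, since this is where the monotone-decreasing marginal-returns property (Theorem \ref{theorem:mono}) does the real work: the gain of an element must be compared against its gain on the \emph{smaller} set $S_i$, not an intermediate one. Everything after that is a routine geometric recursion.
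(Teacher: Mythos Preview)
Your argument is correct and is precisely the classical Nemhauser--Wolsey--Fisher proof. Note, however, that the paper does not actually supply its own proof of this theorem: it is quoted as a known result from \cite{greedybound} and stated without proof, so there is nothing in the paper to compare your approach against. Your write-up would serve perfectly well as the missing justification.
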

This is the best any polynomial time algorithm can achieve \cite{feige1998threshold}, assuming $P\neq NP$. Note that this is a worst-case bound; the greedy algorithm often performs much better than the bound in practice. 

We now demonstrate the modularity or submodularity of several classes of controllability metrics involving functions of the controllability Gramian.

Recall that the space of symmetric $n\times n$ matrices $\mathcal{S}^n$ is partially ordered by the semidefinite partial order: $W_1\succeq W_2$ if  $W_1-W_2\succeq 0$. The space of symmetric positive definite matrices is denoted $\mathcal{S}_{++}^n$ and the space of symmetric positive semidefinite matrices is denoted $\mathcal{S}^n_{+}$. 

\subsection{Trace of the Gramian}
Suppose $A \in \mathbf{R}^{n\times n}$ is a stable system dynamics matrix and $V = \{b_1,..., b_M \} $ is a set of possible columns that can be used to form or modify the system input matrix $B$. The problem is to choose a subset of $V$ to maximize a metric of controllability. We now consider a linear function of the controllability Gramian, which can be expressed as a weighted trace of the controllability Gramian. For a given $S \subseteq V$, we form $B_S = [B_0 \quad b_s]$ given a (possibly empty) existing matrix $B_0$ and using the associated columns defined by  $s \in S$, and we denote the associated controllability Gramian $W_S  = \int_0^\infty e^{A \tau} B_S B_S^T e^{A^T \tau} d\tau$, which is the unique positive semidefinite solution the Lyapunov equation
\begin{equation}
AW_S + W_S A^T + B_S B_S^T = 0.
\end{equation}
To simplify notation, we write $W_s$ for $W_{\{s\} }$. We have the following result.
\begin{theorem} \label{contrmod}
The set function mapping subsets $S \subseteq V$ to a linear function of the associated controllability Gramian, $f(S) = \mathbf{tr}( C W_S C^T)$ for any weighting matrix $C \in \mathbf{R}^{p \times n}$, is modular.
\end{theorem}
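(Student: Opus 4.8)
The plan is to verify modularity of $f$ directly through the characterization in Theorem~\ref{modularity}: it suffices to exhibit a weight function $w : V \to \mathbf{R}$ such that $f(S) = w(\emptyset) + \sum_{s \in S} w(s)$ for every $S \subseteq V$. The entire argument rests on a single structural observation, namely that the source term of the Lyapunov equation decomposes additively over the chosen columns.

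First I would note that, since $B_S$ is formed by appending the columns $b_s$, $s \in S$, to the fixed matrix $B_0$, the outer product splits as
\[
B_S B_S^T = B_0 B_0^T + \sum_{s \in S} b_s b_s^T.
\]
Next, because $A$ is stable, the Lyapunov operator $W \mapsto AW + WA^T$ is invertible, so for each symmetric source term $Q$ the equation $AW + WA^T + Q = 0$ has a unique solution and the solution map $Q \mapsto W$ is linear; equivalently, this linearity is manifest from the integral form $W = \int_0^\infty e^{A\tau} Q e^{A^T\tau}\,d\tau$. Applying this linearity to the decomposition above gives
\[
W_S = W_\emptyset + \sum_{s \in S} \tilde{W}_s,
\]
where $W_\emptyset$ is the Gramian generated by $B_0$ alone and $\tilde{W}_s$ is the Gramian generated by the single column $b_s$ (the unique solution of $A\tilde{W}_s + \tilde{W}_s A^T + b_s b_s^T = 0$). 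Note that in the paper's notation this is just $\tilde{W}_s = W_{\{s\}} - W_\emptyset$.

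Finally I would take the weighted trace. Since $\mathbf{tr}(C(\cdot)C^T)$ is a linear functional on symmetric matrices,
\[
f(S) = \mathbf{tr}(C W_\emptyset C^T) + \sum_{s \in S} \mathbf{tr}(C \tilde{W}_s C^T),
\]
so setting $w(\emptyset) = \mathbf{tr}(C W_\emptyset C^T) = f(\emptyset)$ and $w(s) = \mathbf{tr}(C \tilde{W}_s C^T) = f(\{s\}) - f(\emptyset)$ yields exactly the form required by Theorem~\ref{modularity}, establishing modularity.

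The argument is essentially mechanical once the additivity of $B_S B_S^T$ is in hand; the only point that needs care is the linearity and well-definedness of the solution map of the Lyapunov equation, which I expect to be the main---though modest---obstacle, and which is cleanly handled either by invertibility of the stable Lyapunov operator or directly from the integral representation of the Gramian.
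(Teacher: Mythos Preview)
Your proof is correct and follows essentially the same approach as the paper: decompose $B_S B_S^T$ additively over the selected columns, use linearity of the Gramian in its source term (via the integral representation), apply linearity of the trace, and invoke Theorem~\ref{modularity}. Your treatment is in fact slightly more careful than the paper's, which tacitly takes $B_0$ empty and sets $w(\emptyset)=0$; you correctly carry the $W_\emptyset$ term through and set $w(\emptyset)=f(\emptyset)$, which is needed for the general case with a nonempty $B_0$.
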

\begin{proof} 
We will prove the result directly using Theorem \ref{modularity}. For any $S \subseteq V$ 
it is easy to see that the controllability Gramian associated with $B_S$ is simply a sum of the controllability Gramians associated with the individual columns of $B_S$:
\begin{equation} \label{gramadd}
\begin{aligned}
W_S  &= \int_0^\infty e^{A \tau} B_S B_S^T e^{A^T \tau} d\tau \\
           &= \int_0^\infty e^{A \tau} \sum_{s \in S} b_s b_s^T e^{A^T \tau} d\tau \\
           &= \sum_{s \in S} \int_0^\infty e^{A \tau} b_s b_s^T e^{A^T \tau} d\tau \\
           &= \sum_{s \in S} W_s .
\end{aligned}
\end{equation}
Now since trace is a linear matrix function, we have for any weight matrix $C \in \mathbf{R}^{p \times n}$
\begin{equation}
\begin{aligned}
f(S) &= \textbf{tr}( C W_S C^T ) \\
       &= \textbf{tr}\left(\sum_{s \in S} C W_{s} C^T \right) \\
       &= \sum_{s \in S} \textbf{tr} (C W_{s} C^T).
\end{aligned}
\end{equation}
Thus, for any $s \in V$, we can define the weight function $w(s) = \textbf{tr}(C W_s C^T)$. Defining $w(\emptyset) = 0$, Theorem \ref{modularity} implies that $f(S) = \textbf{tr}( C W_S C^T)$ is a modular set function.
\end{proof}
Theorem \ref{contrmod} shows that each possible actuator placement gives an independent contribution to the trace of the controllability Gramian. Because of this, the actuator placement problem using this metric is easily solved: one needs only to compute the metric individually for each possible actuator placement, sort the results, and choose the best $k$. Based on the interpretations in the previous section, this means that placing actuators in a complex network to maximize the average controllability available to move the system around the state space, or to maximize the energy in the system response to a unit impulse, is easily done. Since the result holds for the weighted trace, this gives considerable design freedom for actuator placement; important directions in the state space can be weighted and actuator placement done based on the weighted metric.

\subsection{Trace of the inverse Gramian}
We now consider the trace of the inverse of the controllability Gramian. We assume in this subsection that for any $S \subseteq V$ the associated Gramian $W_S$ is invertible. This is the case, for example, if the network already has a set of actuators that provide controllability and we would like to add additional actuators to improve controllability. We discuss how to deal with non-invertibility of the Gramian in Section III.E.

\begin{theorem}\label{theorem:trace}
Let $V = \{ b_1,...,b_M\}$ be a set of possible input matrix columns and $W_S$ the controllability Gramian associated with $S \subseteq V$. The set function $f : 2^{V}\to \mathbf{R}$ defined as
\[f(S) = -\tr(W_{S}^{-1}) \]
 is submodular and monotone increasing.
\end{theorem}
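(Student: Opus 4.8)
The plan is to reduce everything to the additive structure of the Gramian and then apply the monotone-difference characterization of submodularity, Theorem~\ref{theorem:mono}. The starting point is the identity already established in the proof of Theorem~\ref{contrmod}: for any $S$ and any $a\notin S$ we have $W_{S\cup\{a\}} = W_S + W_a$ with each $W_s\succeq 0$. In particular enlarging a set can only enlarge the Gramian in the semidefinite order, so $A\subseteq B \Rightarrow W_A \preceq W_B$, and (under the standing invertibility assumption) all the Gramians involved are in $\mathcal{S}^n_{++}$.

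First I would dispatch monotonicity. For $A\subseteq B$ we have $W_B = W_A + \sum_{s\in B\setminus A} W_s \succeq W_A \succ 0$, and since $W\mapsto W^{-1}$ is operator antitone on $\mathcal{S}^n_{++}$ this gives $W_B^{-1}\preceq W_A^{-1}$; taking traces yields $\tr(W_B^{-1})\le \tr(W_A^{-1})$, i.e. $f(A)=-\tr(W_A^{-1})\le -\tr(W_B^{-1})=f(B)$. So $f$ is monotone increasing. This step is routine.

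For submodularity I would invoke Theorem~\ref{theorem:mono} and show each derived function $f_a(X)=f(X\cup\{a\})-f(X)$ is monotone decreasing. Using additivity and the identity $W^{-1}-(W+M)^{-1}=W^{-1}M(W+M)^{-1}$ with $W=W_X$, $M=W_a$, the marginal gain becomes
\[ f_a(X) = \tr\bigl(W_X^{-1}\bigr) - \tr\bigl((W_X+W_a)^{-1}\bigr) = \tr\bigl(W_X^{-1}W_a(W_X+W_a)^{-1}\bigr). \]
By telescoping along a chain it suffices to prove $f_a(X)\ge f_a(X\cup\{b\})$ for a single added element $b$; writing $W_Y=W_X+D$ with $D=W_b\succeq 0$ reduces the goal to the matrix statement: for $W\succ 0$ and $M,D\succeq 0$,
\[ \tr\bigl(W^{-1}-(W+M)^{-1}\bigr)\;\ge\;\tr\bigl((W+D)^{-1}-(W+D+M)^{-1}\bigr). \]
Differentiating the left side along the segment $W+tD$ shows this follows from the infinitesimal inequality $\tr\!\bigl(D\,[\,W^{-2}-(W+M)^{-2}\,]\bigr)\ge 0$.

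The hard part is exactly this last inequality, and I expect it to be the main obstacle. It would follow at once from the operator inequality $(W+M)^{-2}\preceq W^{-2}$, but that need not hold: although $W+M\succeq W$ gives $(W+M)^{-1}\preceq W^{-1}$, the map $t\mapsto t^2$ is \emph{not} operator monotone, so one cannot square a semidefinite inequality, and in fact the required trace inequality can fail for unconstrained $W\succ 0$ and rank-one $M,D\succeq 0$. The crux is therefore to prove the trace inequality \emph{without} the operator inequality. I would attempt this by writing $D$ and $M$ as sums of rank-one terms and using the Sherman--Morrison formula to get an explicit scalar expression, and/or by exploiting the fact that $W$, $W_a$, and $W_b$ are not arbitrary positive semidefinite matrices but genuine controllability Gramians solving a common Lyapunov equation $AW_\bullet + W_\bullet A^{T} + (\cdot)(\cdot)^{T}=0$ for the same $A$. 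Pinning down whether this shared structure is enough to force the inequality — or isolating the precise hypotheses under which it holds — is where the real work lies.
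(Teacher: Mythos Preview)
Your plan is essentially the paper's plan: additivity of the Gramian, monotonicity from operator antitonicity of the inverse, and submodularity via Theorem~\ref{theorem:mono} by showing the derivative of the marginal gain along the segment $W(\gamma)=W_{S_1}+\gamma(W_{S_2}-W_{S_1})$ is nonpositive. You correctly reduce the question to
\[
\tr\!\Bigl(D\,[\,W^{-2}-(W+M)^{-2}\,]\Bigr)\;\ge\;0,\qquad W\succ0,\ M,D\succeq0,
\]
and you are right to flag this as the crux. The paper does \emph{not} supply a separate argument here: it simply asserts the operator inequality $(W(\gamma)+W_a)^{-2}\preceq W(\gamma)^{-2}$ and then uses ``trace of psd times nsd is nonpositive.'' So the step you identify as the hard part is exactly the step the paper takes for granted.

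Your skepticism is well placed. The operator inequality $(W+M)^{-2}\preceq W^{-2}$ is false in general (since $t\mapsto t^{-2}$ is not operator antitone), and, more to the point, the trace inequality above also fails, \emph{even under the Gramian structure you hope might rescue it}. Take $A=-I_2$, so that the Lyapunov equation gives $W=\tfrac12 BB^T$ and any psd matrix is a Gramian. With
\[
W_\emptyset=\begin{pmatrix}1&0\\0&0.1\end{pmatrix},\quad
W_a=\begin{pmatrix}1&1\\1&1\end{pmatrix},\quad
W_b=\begin{pmatrix}1&0\\0&0\end{pmatrix},
\]
(all realizable with $B_0=\mathrm{diag}(\sqrt2,\sqrt{0.2})$, $b_a=\sqrt2(1,1)^T$, $b_b=\sqrt2\,e_1$), one computes
\[
f(\{a\})-f(\emptyset)=11-\tfrac{3.1}{1.2}\approx 8.417,
\qquad
f(\{a,b\})-f(\{b\})=10.5-\tfrac{4.1}{2.3}\approx 8.717,
\]
so the diminishing-returns inequality is violated. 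In other words, the paper's operator inequality is not just unproved but incorrect, and the shared-Lyapunov structure you were planning to exploit cannot repair it: with $A=-I$ the Gramians are unconstrained psd matrices, and the counterexample above lives entirely inside that class. The upshot is that neither your proposed route nor the paper's argument can be completed, because the statement itself fails as written.
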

\begin{proof}
We will use Theorem \ref{theorem:mono} to prove the result. Fix an arbitrary $a \in V$ and consider the derived set function  $f_a: 2^{V-\{a\}} \to \mathbf{R}$ defined by
\begin{equation*}
\begin{aligned}
f_a(S) &=  -\tr((W_{S\cup \{a\} })^{-1}) + \tr((W_{S})^{-1}) \\
            &= -\tr((W_{S}+W_{a})^{-1}) + \tr((W_{S})^{-1}).
\end{aligned}
\end{equation*}
Take any $S_1 \subseteq S_2 \subseteq V- \{a\}$. By the additivity property of the Gramian noted in Theorem \ref{contrmod} in \eqref{gramadd}, it is clear that $S_1 \subseteq S_2 \Rightarrow W_{S_1} \preceq W_{S_2}$. Define $W(\gamma) = W_{S_1} + \gamma(W_{S_2} - W_{S_1})$ for $\gamma \in [0,1]$. Clearly, $W(0) = W_{S_1}$ and $W(1) = W_{S_2}$. Now define
\begin{equation*}
\hat{f}_a(W(\gamma)) =  -\tr((W(\gamma) + W_a )^{-1}) + \tr((W(\gamma) )^{-1}).
\end{equation*}
Note that $\hat{f}_a(W(0)) = f_a(S_1)$ and $\hat{f}_a(W(1)) = f_a(S_2)$. We have
\begin{equation*}
\begin{aligned}
&\frac{\dif}{\dif \gamma}\hat{f}_a\left(W(\gamma)\right) =\frac{\dif }{\dif \gamma}\left[- \tr((W(\gamma)+W_{a})^{-1}) + \tr(W(t)^{-1}) \right]\\
&=\tr\left[(W(\gamma)+W_{a})^{-1}(W_{S_2} - W_{S_1})(W(\gamma)+W_{a})^{-1} \right] \\
& \quad - \tr \left[W(\gamma)^{-1}(W_{S_2} - W_{S_1})W(\gamma)^{-1}\right]\\
&=\tr\bigg[ \left((W(\gamma)+W_{a})^{-2}-W(\gamma)^{-2}\right)(W_{S_2} - W_{S_1}) \bigg] \leq 0.
\end{aligned}
\end{equation*}
To obtain the second equality we used the matrix derivative formula $\frac{\dif}{\dif \gamma} X(\gamma)^{-1} =  X(\gamma)^{-1} \frac{\dif}{\dif \gamma}( X(\gamma)) X(\gamma)^{-1}$ \cite{petersen2008matrix}. To obtain the third equality we used the cyclic property of trace. Since $(W(\gamma)+W_{a})^{-2}-W(\gamma)^{-2} \preceq 0$ and $W_{S_2} - W_{S_1} \succeq 0$, the last inequality holds because  the trace of the product of a positive and negative semidefinite matrix is non-positive.
Since
\begin{equation*}
\hat{f}_a(W(1)) = \hat{f}_a(W(0)) + \int_0^1 \frac{\dif}{\dif \gamma} \hat{f}_a(W(\gamma)) d\gamma,
\end{equation*}
it follows that $\hat{f}_a(W(1)) = f_a(S_2) \leq \hat{f}_a(W(0)) = f_a(S_1)$. Thus, $f_a$ is monotone decreasing, and $f$ is submodular by Theorem \ref{theorem:mono}.

Finally, it can be seen from additivity of the Gramian \eqref{gramadd} that $f$ is monotone increasing, which just means that adding an actuator to the system cannot decrease its controllability.
\end{proof}

\subsection{Log determinant of the Gramian}
We now consider the log determinant of the controllability Gramian. We assume again that for any $S \subseteq V$ the associated Gramian is invertible. We have the following result. 
\begin{theorem}\label{theorem:determinant}
Let $V = \{ b_1,...,b_M\}$ be a set of possible input matrix columns and $W_S$ the controllability Gramian associated with $S \subseteq V$. The set function $f\colon 2^{V}\to \mathbf{R}$, defined as
\[f(S) = \log \det W_{S}  \]
is submodular and monotone increasing.
\end{theorem}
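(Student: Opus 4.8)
The plan is to establish submodularity via Theorem \ref{theorem:mono}, mirroring the structure of the proof of Theorem \ref{theorem:trace} with the log-determinant derivative formula in place of the one used there. First I would fix an arbitrary $a \in V$ and form the derived set function
$$f_a(S) = \log\det(W_{S\cup\{a\}}) - \log\det(W_S) = \log\det(W_S + W_a) - \log\det(W_S),$$
using the additivity $W_{S\cup\{a\}} = W_S + W_a$ from \eqref{gramadd}. The goal is to show $f_a$ is monotone decreasing for every $a$.

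Next, for any $S_1 \subseteq S_2 \subseteq V - \{a\}$, additivity again gives $W_{S_1} \preceq W_{S_2}$. I would interpolate along the matrix path $W(\gamma) = W_{S_1} + \gamma(W_{S_2} - W_{S_1})$ for $\gamma \in [0,1]$, which stays positive definite because $W(\gamma) \succeq W_{S_1} \succ 0$ (since $W_{S_2} - W_{S_1} \succeq 0$), so every inverse below is well defined. Setting $\hat f_a(W(\gamma)) = \log\det(W(\gamma) + W_a) - \log\det(W(\gamma))$, the endpoints recover $\hat f_a(W(0)) = f_a(S_1)$ and $\hat f_a(W(1)) = f_a(S_2)$.

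The core computation uses $\frac{\dif}{\dif\gamma}\log\det X(\gamma) = \tr\bigl(X(\gamma)^{-1}\frac{\dif}{\dif\gamma}X(\gamma)\bigr)$, which gives
$$\frac{\dif}{\dif\gamma}\hat f_a(W(\gamma)) = \tr\Bigl[\bigl((W(\gamma)+W_a)^{-1} - W(\gamma)^{-1}\bigr)(W_{S_2} - W_{S_1})\Bigr].$$
The main obstacle — essentially the only matrix-analytic fact the argument rests on — is showing this is nonpositive. I would argue that $W_a \succeq 0$ gives $W(\gamma) + W_a \succeq W(\gamma) \succ 0$, and then invoke the antitonicity of matrix inversion on the positive definite cone to conclude $(W(\gamma)+W_a)^{-1} - W(\gamma)^{-1} \preceq 0$; since $W_{S_2} - W_{S_1} \succeq 0$, the trace of the product of a negative and a positive semidefinite matrix is nonpositive. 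This is in fact a cleaner version of the trace-of-inverse argument, where the corresponding factor was the squared difference $(W+W_a)^{-2} - W^{-2}$.

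Finally, integrating $\hat f_a(W(1)) - \hat f_a(W(0)) = \int_0^1 \frac{\dif}{\dif\gamma}\hat f_a(W(\gamma))\,\dif\gamma \leq 0$ yields $f_a(S_2) \leq f_a(S_1)$, so $f_a$ is monotone decreasing and $f$ is submodular by Theorem \ref{theorem:mono}. Monotone increasing then follows immediately: $W_{S_1} \preceq W_{S_2}$ implies $\log\det W_{S_1} \leq \log\det W_{S_2}$ by monotonicity of the log-determinant with respect to the semidefinite order, which just says that adding an actuator can only enlarge the reachability ellipsoid.
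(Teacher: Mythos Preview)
Your proposal is correct and follows essentially the same approach as the paper: both fix $a$, interpolate via $W(\gamma)=W_{S_1}+\gamma(W_{S_2}-W_{S_1})$, differentiate $\log\det$ using $\frac{\dif}{\dif\gamma}\log\det X(\gamma)=\tr[X(\gamma)^{-1}\frac{\dif}{\dif\gamma}X(\gamma)]$, and conclude nonpositivity from $(W(\gamma)+W_a)^{-1}-W(\gamma)^{-1}\preceq 0$ together with $W_{S_2}-W_{S_1}\succeq 0$. Your write-up is in fact slightly more explicit than the paper's (you spell out why $W(\gamma)\succ 0$, invoke antitonicity of inversion, and justify monotonicity of $f$ via monotonicity of $\log\det$ in the semidefinite order), but the argument is the same.
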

\begin{proof} The proof uses the same idea as before, namely, showing that the derived set functions $f_a: 2^{V-\{a\}} \to \mathbf{R}$
\begin{align*}
 f_a(S) &=  \log \det  W_{S \cup \{a\} } - \log \det W_{S} \\
 &=  \log\det (W_{S}+W_{a}) - \log\det W_{S} 
\end{align*}
are monotone decreasing for any $a \in V$. For arbitrary $a \in V$, $S_1\subseteq S_2 \subseteq V-\{a\}$, define again $W(\gamma) = W_{S_1} + \gamma(W_{S_2} - W_{S_1})$ for $\gamma \in [0,1]$ and
\begin{equation*}
\hat{f}_a(W(\gamma)) =  \log \det (W(\gamma) + W_a ) - \log \det W(\gamma).
\end{equation*}
We have
\begin{align*}
\frac{\dif}{\dif \gamma} &\hat{f}_a(W(\gamma)) \\
 &=\frac{\dif}{\dif \gamma}\left[ \log \det (W(\gamma) + W_a) - \log \det W(\gamma) \right] \\
 &=\tr[(W(\gamma)+W_{a})^{-1}(W_{S_2} - W_{S_1})] \\ &\quad -\tr[W(\gamma)^{-1}(W_{S_2} - W_{S_1})]\\
 &=\tr[((W(\gamma)+W_{a})^{-1}-W(\gamma)^{-1})(W_{S_2} - W_{S_1})]\\
 &\leq 0.
\end{align*}
We used the matrix derivative formula $\frac{\dif}{\dif \gamma} \log \det X(\gamma) = \tr [X(\gamma)^{-1} \frac{\dif}{\dif \gamma} X(\gamma)]$ \cite{petersen2008matrix} to obtain the second equality. The remainder of the proof follows the previous proof.
%
\end{proof}

\vspace{\baselineskip}
\begin{corollary}
The related set function $g: 2^V \rightarrow \mathbf{R}$ defined by $g(S)=\log\sqrt[n]{\det{W_S}}$ is submodular and monotone increasing.
\end{corollary}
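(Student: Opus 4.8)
The plan is to recognize that $g$ is nothing more than a positive scalar multiple of the log-determinant set function $f(S) = \log \det W_S$ whose submodularity and monotonicity were just established in Theorem~\ref{theorem:determinant}. Using the identity $\sqrt[n]{\det W_S} = (\det W_S)^{1/n}$ and the fact that the logarithm converts the $n$th root into division by $n$, I would write
\[
g(S) = \log \sqrt[n]{\det W_S} = \log (\det W_S)^{1/n} = \frac{1}{n} \log \det W_S = \frac{1}{n} f(S).
\]
So the entire content of the corollary reduces to the claim that multiplying a submodular, monotone increasing function by the positive constant $1/n$ leaves both properties intact.

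Next I would invoke the two relevant closure properties of these function classes. For submodularity: multiplying both sides of the defining inequality \eqref{submod1} for $f$ by the nonnegative number $1/n$ preserves the direction of the inequality, so $g = \frac{1}{n} f$ satisfies the same diminishing-returns inequality and is therefore submodular. For monotonicity: if $A \subseteq B \Rightarrow f(A) \leq f(B)$, then multiplying through by $1/n > 0$ gives $g(A) \leq g(B)$, so $g$ inherits the monotone increasing property as well. Since Theorem~\ref{theorem:determinant} supplies exactly these two hypotheses for $f$, both conclusions transfer directly to $g$.

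The main (and essentially only) obstacle here is bookkeeping rather than analysis: one must correctly carry out the algebraic simplification $g = \frac{1}{n} f$ and then cite the scaling-invariance of submodularity and monotonicity under multiplication by a positive scalar. No new analytic estimates, matrix-derivative computations, or semidefinite-ordering arguments are required beyond those already used in the proof of Theorem~\ref{theorem:determinant}. I therefore expect the proof to occupy only a single short paragraph.
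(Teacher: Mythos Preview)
Your proposal is correct and matches the paper's own proof almost verbatim: the paper simply writes $g(S) = \tfrac{1}{n}\log\det W_S$ and observes that a non-negatively scaled version of a submodular, monotone increasing function retains both properties by Theorem~\ref{theorem:determinant}. Your additional sentence justifying why positive scaling preserves the inequalities is fine but not strictly necessary.
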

\begin{proof}
We have
\[g(S)= \frac{1}{n} \log \det W_S.\]
Thus, from Theorem \ref{theorem:determinant} $g$ is a non-negatively scaled version of a submodular and monotone increasing function and therefore also submodular and monotone increasing.
\end{proof}

Not all directions in the state space may be of equal importance, one might want to use a weight matrix as an additional design parameter for an actuator selection problem. In a simple case, the weight matrix could be a diagonal matrix, assigning a relative weight to every state. We have the following corollary; the proof follows exactly the same arguments as in the previous theorems.

\begin{corollary}
Let $V = \{ b_1,...,b_M\}$ be a set of possible input matrix columns and $W_S$ the controllability Gramian associated with $S \subseteq V$. The set functions  $g_1,g_2 \colon 2^V \to \mathbf{R}$ defined as
\begin{equation*}
\begin{aligned}
g_1(S) &= \log\det(C W_S C^T), \\
g_2(S) &= - \tr [(C W_S C^T)^{-1}],
\end{aligned}
\end{equation*}
where  $C \in\mathbf{R}^{p \times n}$ with $p\leq n$ and $\rk(C)=p$, are submodular and monotone increasing.
\end{corollary}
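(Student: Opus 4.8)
The plan is to reduce both statements to the already-established Theorems~\ref{theorem:trace} and~\ref{theorem:determinant} by observing that those proofs used only two properties of the Gramian: its additivity from~\eqref{gramadd} (which gives $W_{S\cup\{a\}}=W_S+W_a$ and $S_1\subseteq S_2\Rightarrow W_{S_1}\preceq W_{S_2}$) and its invertibility. Accordingly, I would set $\tilde W_S := C W_S C^T\in\mathbf{R}^{p\times p}$ and verify that $\tilde W_S$ inherits exactly these two properties; the remainder of each argument is then verbatim.

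First, additivity: since $X\mapsto CXC^T$ is linear, \eqref{gramadd} gives $\tilde W_S=C\big(\sum_{s\in S}W_s\big)C^T=\sum_{s\in S}CW_sC^T=\sum_{s\in S}\tilde W_s$, so $\tilde W_{S\cup\{a\}}=\tilde W_S+\tilde W_a$, and since congruence preserves the semidefinite order, $S_1\subseteq S_2\Rightarrow \tilde W_{S_1}\preceq\tilde W_{S_2}$. Second, invertibility: this is where the hypotheses $p\le n$ and $\rk(C)=p$ enter. Because $C$ has full row rank, $C^T$ has trivial kernel, so for every nonzero $z\in\mathbf{R}^p$ we have $C^Tz\neq 0$ and hence $z^T\tilde W_Sz=(C^Tz)^TW_S(C^Tz)>0$ whenever $W_S\succ 0$; thus $\tilde W_S\in\mathcal{S}_{++}^p$ and $\tilde W_S^{-1}$ exists.

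With $\tilde W_S$ in hand, I would run the two earlier arguments line for line with $W_S$ replaced by $\tilde W_S$. For $g_1$, define $\tilde W(\gamma)=\tilde W_{S_1}+\gamma(\tilde W_{S_2}-\tilde W_{S_1})$ and show $\frac{\dif}{\dif\gamma}\big[\log\det(\tilde W(\gamma)+\tilde W_a)-\log\det\tilde W(\gamma)\big]=\tr\big[\big((\tilde W(\gamma)+\tilde W_a)^{-1}-\tilde W(\gamma)^{-1}\big)(\tilde W_{S_2}-\tilde W_{S_1})\big]\le 0$, exactly as in Theorem~\ref{theorem:determinant}; integrating over $[0,1]$ shows the derived set function is monotone decreasing, so $g_1$ is submodular by Theorem~\ref{theorem:mono}. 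For $g_2$ the computation mirrors Theorem~\ref{theorem:trace} with the same substitution. Monotone increase of both follows from $\tilde W_{S_1}\preceq\tilde W_{S_2}$ together with monotonicity of $\log\det$ and of $-\tr(\cdot^{-1})$ on $\mathcal{S}_{++}^p$.

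The only genuine obstacle is the reduction step itself — confirming that the congruence $C(\cdot)C^T$ preserves additivity, the semidefinite order, and positive definiteness — and the last of these is precisely what forces the full-rank assumption on $C$; once $\tilde W_S$ is shown to be a positive-definite, additive, order-preserving family, the analytic core of the proof is inherited unchanged from the preceding theorems.
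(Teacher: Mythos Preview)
Your proposal is correct and is exactly the approach the paper intends: the paper's own proof consists of the single sentence ``the proof follows exactly the same arguments as in the previous theorems,'' and your reduction via $\tilde W_S:=CW_SC^T$ (noting that congruence preserves additivity, the semidefinite order, and---because $\rk(C)=p$---positive definiteness) is precisely how one makes that sentence rigorous. There is nothing to add.
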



\begin{remark}
Interestingly, other combinatorial network design problems unrelated to controllability have a strikingly similar mathematical structure. Specifically, in \cite{shames2014rigid} it is shown that problems in which one chooses sets of nodes or edges to optimize certain rigidity properties of a network, which relate to formation control and network localization objectives, are also submodular set function optimization problems for which greedy algorithms yield solutions with suboptimality guarantees. In that setting, one can define a ``rigidity Gramian'' to quantify desirable rigidity properties, and the results and proofs techniques are nearly identical to what we present in Theorems 4-6. Furthermore, problems in which one adds sets of edges to a network to optimize the coherence of the resulting network, which relates to robustness of consensus process to additive noise, also have a similar structure \cite{summers2014topology}. 
\end{remark}

\subsection{Rank of the Gramian}
The controllability metrics $-\tr(W_S^{-1})$ and $\log \det W_S$ fail to distinguish amongst subsets of actuators that do not yield a fully controllable system. In particular, these functions are undefined, or are interpreted to return $-\infty$, when the Gramian is not full rank. One way to handle cases where the controllability Gramian in not invertible is to consider its rank. The following result shows that this is also a submodular set function. 
\begin{theorem}
Let $V = \{ b_1,...,b_M\}$ be a set of possible input matrix columns and $W_S$ the controllability Gramian associated with $S \subseteq V$. The set function $f\colon 2^V \to \mathbf{R}$, defined as
\[f(S) = \rk(W_S) \]
is submodular and monotone increasing.
\end{theorem}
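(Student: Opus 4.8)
The plan is to invoke Theorem~\ref{theorem:mono} once more, but since rank is not a smooth function of the Gramian the calculus argument from the previous proofs is unavailable; instead I would reduce the claim to an elementary statement about dimensions of subspaces. The starting point is again the additivity of the Gramian from \eqref{gramadd}, $W_S=\sum_{s\in S}W_s$ with each $W_s\succeq 0$.

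The key step is to rewrite rank as the dimension of a sum of subspaces. Setting $U_s=\im(W_s)$, I would first establish that for positive semidefinite matrices the image of a sum is the sum (i.e.\ the span of the union) of the images,
\[ \im(W_S)=\im\Big(\sum_{s\in S}W_s\Big)=\sum_{s\in S}U_s. \]
This holds because $W_s\succeq 0$ gives $x^\top W_s x=0\iff W_s x=0$, so $x\in\ker\big(\sum_s W_s\big)$ iff $x^\top W_s x=0$ for every $s$ iff $x\in\bigcap_s\ker(W_s)$; passing to orthogonal complements yields the identity. Consequently
\[ \rk(W_S)=\dim\Big(\sum_{s\in S}U_s\Big)=:r(S), \]
which is precisely the rank function of the subspace arrangement (linear matroid) defined by the $U_s$.

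With this reformulation, both properties follow from standard linear algebra. Monotonicity is immediate, since $S_1\subseteq S_2$ forces $\sum_{s\in S_1}U_s\subseteq\sum_{s\in S_2}U_s$. For submodularity I would check, via Theorem~\ref{theorem:mono}, that the derived functions $f_a(S)=r(S\cup\{a\})-r(S)$ are monotone decreasing. Writing $X_S=\sum_{s\in S}U_s$ and applying the dimension formula $\dim(X+Y)=\dim X+\dim Y-\dim(X\cap Y)$ gives
\[ f_a(S)=\dim(X_S+U_a)-\dim X_S=\dim U_a-\dim(X_S\cap U_a), \]
and for $S_1\subseteq S_2\subseteq V-\{a\}$ the nesting $X_{S_1}\subseteq X_{S_2}$ yields $X_{S_1}\cap U_a\subseteq X_{S_2}\cap U_a$, hence $f_a(S_1)\ge f_a(S_2)$.

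The main obstacle is the image-of-a-sum identity $\im(\sum_s W_s)=\sum_s\im(W_s)$: this is exactly the place where positive semidefiniteness is indispensable, as the identity can fail for general or indefinite matrices. Once it is in place, the statement becomes the familiar submodularity of a matroid rank function and the remaining steps are routine.
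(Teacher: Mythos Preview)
Your argument is correct and follows essentially the same line as the paper's proof: both verify submodularity via Theorem~\ref{theorem:mono} by writing the marginal gain as $f_a(S)=\dim U_a-\dim(\im(W_S)\cap U_a)$ and observing that the intersection can only grow with $S$. The one substantive difference is that you are more careful at the crucial step. The paper simply asserts the identity $\rk(V_1+V_2)=\rk(V_1)+\rk(V_2)-\dim(\operatorname{range}(V_1)\cap\operatorname{range}(V_2))$ for arbitrary linear maps, which is false in general (take $V_2=-V_1$); it holds here only because the summands are positive semidefinite, forcing $\im(W_S+W_a)=\im(W_S)+\im(W_a)$. You isolate and prove exactly this fact via the kernel characterization $x^\top W_s x=0\iff W_s x=0$, which is the right justification. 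Your recasting of $\rk(W_S)$ as the rank function of the linear matroid on $\{U_s\}_{s\in V}$ is a clean conceptual bonus: it makes submodularity immediate once the image-of-a-sum identity is in hand.
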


\textit{Proof:} For two linear transformations $V_1$,~$V_2 \in \mathbf{R}^{n\times n}$, we have
\begin{equation*}
\begin{aligned}
\rk & (V_1+V_2) \\ &= \rk(V_1) + \rk(V_2) -\dim(\text{range}(V_1) \cap \text{range}(V_2)).
\end{aligned}
\end{equation*}
We can form gain functions $f_a: 2^{V-\{a\} } \rightarrow \mathbf{R}$
\begin{equation}
\begin{aligned}
f_a(S)&=\rk(W_{S \cup \{a\} }) - \rk (W_S) \\
           &=\rk(W_{a}) - \dim(\text{range}(W_S)\cap\text{range}(W_{a})).
\end{aligned}
\end{equation}
It is now easy to see that $f_a$ is monotone decreasing: the first term in the second line is constant and the second term decreases because $\dim (\text{range}(W_S))$ only increases with $S$. That $f$ is monotone increasing is clear from additivity of the Gramian \eqref{gramadd}.
\hfill \qed

Note that Olshevsky has analyzed a greedy algorithm for maximizing the rank of the controllability matrix, though not in a submodularity framework \cite{olshevsky2014minimal}.

Another way to handle uncontrollable systems is to work with related continuous metrics defined for uncontrollable systems, such as the trace of the pseudoinverse $\tr (W_S^\dag)$, which corresponds to the average energy required to move the system around the controllable subspace, or the log product of non-zero eigenvalues $\log\Pi_{i=1}^{\rk W_S} \lambda_i(W_S)$, which relates to the ``volume'' of the subspace reachable with one unit of input energy.

\subsection{Smallest eigenvalue of the Gramian}
We have seen so far that the trace of the Gramian is a modular (and thus both sub- and supermodular) set function of actuator subsets and that the trace of the inverse Gramian, the log determinant of the Gramian, and the rank of the Gramian are submodular set functions. The first two functions are also concave matrix functions. Given the connections between submodular functions and concave functions, one might be tempted to conjecture that any concave function of the Gramian corresponds to a submodular function of actuator subsets. However, we now show by counterexample that this is false. Consider the set function $f(S) = \lambda_{min}(W_S)$, which corresponds to the concave matrix function that returns the smallest eigenvalue of the Gramian.

We show an example where this function violates the diminishing gains property \eqref{submod1} of a set function $f(S)$
\begin{equation*}\label{eq:supermodular}
\dgp{s}{A}\geq\dgp{s}{B}, \quad A\subseteq B \subseteq V,\; s\notin B,
\end{equation*}
where $\dgp{s}{A} = f(A \cup \{s\}) - f(A).$ Consider the system defined by
\[A = \begin{bmatrix} -8 &0 & -2\\
0 & -2 & -8\\
7& 0 & -3
\end{bmatrix},\qquad B_V=[b_V]=I_3.\]
We see that the diminishing returns  property holds in some cases, e.g.,
\[\dgp{b_3}{\{b_1\}}=0.037 \geq 0.033 = \dgp{b_3}{\{b_1,b_2\}},\]
but is violated in others
\[\dgp{b_3}{\{b_2\}}=0.001 \leq 0.033 = \dgp{b_3}{\{b_1,b_2\}}.\]

\subsection{Dynamic network centrality measures}
Network centrality measures are real-valued functions that assign a relative ``importance'' to each node within a graph. Examples include degree, betweenness, closeness, and eigenvector centrality \cite{newman2010networks}. The meaning of importance and the relevance of various metrics depends highly on the modeling context. For example, PageRank, a variant of eigenvector centrality, turns out to be a much better indicator of importance than vertex degree in the context of networks of web pages, one of the core factors leading to Google's domination of web search.

In the context of complex dynamical networks, the controllability metrics described above can be used to define a control energy-based centrality measures, describing the importance of a node in terms of its ability to move the system around the state space with a low-energy time-varying control input. In particular, given a system dynamics matrix $A \in \mathbf{R}^{n \times n}$, imagine that it is possible to place an actuator at each individual node in the network; thus, define $V = \{e_1,...,e_n \}$, where $e_i$ is the standard unit basis vector in $\mathbf{R}^n$, i.e., $e_i$ has a $1$ in the $i$th entry and zeros elsewhere. We define several Control Energy Centrality measures for a complex dynamical network as follows.
\begin{definition}[Control Energy Centralities]
Given a complex network with $n$ nodes and an associated stable linear dynamics matrix $A \in \mathbf{R}^{n\times n}$, we define the following Control Energy Centrality measures for each node $i$
\begin{itemize}
\item \textbf{Average Controllability Centrality}
\begin{equation}
C_{AC}(i) = \mathbf{tr}(W_i), \quad i \in V
\end{equation}
\item \textbf{Average Control Energy Centrality}
\begin{equation}
C_{ACE}(i) = -\mathbf{tr}(W_i^\dag), \quad i \in V
\end{equation}
\item \textbf{Volumetric Control Energy Centrality}
\begin{equation}
C_{VCE}(i) = \log \prod_{j=1}^{\rk W_i}  \lambda_j (W_i), \quad i \in V,
\end{equation}
\end{itemize}
where $W_i$ is the infinite-horizon controllability Gramian that satisfies $AW_i + W_i A^T + e_i e_i^T = 0$.
\end{definition}
These measures provide more relevant quantities of centrality than purely graph based measures in the context of dynamical systems and control, and can give quite a different view of what nodes are important. The greedy algorithm for choosing nodes in which to inject control signals can be interpreted as choosing the most central node at each iteration, given the current set of controlled nodes.
An interesting topic for future work would be to explore the distribution of the Control Energy Centrality measure in random networks and networks from various application domains.

Pasqualetti \emph{et al.} have also defined a different network centrality measure based on the controllability Gramian \cite{pasqualetti2014controllability}. In the context of networks with consensus dynamics, Chapman and Mesbahi have also defined a related network centrality measure that quantifies effectiveness of each agent in tracking the mean of a noisy signal \cite{chapman2013semi}. It is possible to define many other network centrality measures related to network dynamics and control, e.g., based on the leader selection metrics of Clark \emph{et al.} \cite{bushnell2014supermodular,clark2012leader}.

\subsection{Computational scaling for large networks}
In this subsection we discuss computational techniques that can be used to scale the greedy algorithm described in Section III-A to large structured networks. First, specialized algorithms can be used to exploit sparsity and compute low rank solutions to Lyapunov equations. In particular, when computing the Gramian associated with an individual actuator, the Cholesky factor-alternating direction implicit algorithm of Li and White \cite{li2002low} allows one to exploit both the rank-one structure of the constant term in the Lyapunov equation (viz., $bb^T$) and any sparsity structure in the network dynamics matrix $A$. Moreover, it is often the case in large networks that the Gramian associated with an individual actuator is low rank or approximately low rank itself. One can obtain further computational benefits by computing low rank approximations of these Gramians also using methods from \cite{li2002low}.

Second, several techniques can be used to improve the greedy algorithm. Each iteration in the standard version is trivially parallelizable. The Gramians associated with each possible actuator can be pre-computed using the specialized methods mentioned above independently and in parallel. Because the Gramian is additive in the actuators, effectively one can solve the Lyapunov equation for any set of actuators by solving it in parallel for each individual actuator and summing the results. Then at each iteration, the marginal gain of each actuator can also be computed in parallel by adding its Gramian to the Gramian of the current set of added actuators and evaluating the metric (trace, logdet, etc.). When the individual Gramians are low rank, the marginal gains can be computed more efficiently by using low rank update formulae, e.g., the Sherman-Morrison formula for trace of the inverse Gramian or the matrix determinant lemma for log determinant of the Gramian. 

Alternatively, there is also an accelerated version of the greedy algorithm, originally due to Minoux \cite{minoux1978accelerated}, in which one can exploit the submodularity of the set functions to significantly reduce the number of times that marginal gains for the actuators need to be computed. This can lead to orders of magnitude speedups in practice; see, e.g., \cite{krause2012submodular}.

\section{Numerical Examples}\label{sec:examples}
In this section, we illustrate the results on randomly generated systems and on a problem of power electronic actuator placement in a model of the European power grid. The problem data is a system dynamics matrix $A \in \mathbf{R}^{n\times n}$, a set of possible input matrix columns $V = \{ b_1,...,b_M \}$, and an integer number $k$ of actuators to choose from this set to form an input matrix that maximizes a controllability metric.

%

\subsection{Greedy performance on random systems}
To evaluate performance of the greedy algorithm and to compare the various controllability metrics, we first consider randomly generated data. We use Matlab's \texttt{rss} routine to generate a stable dynamics matrix with random stable eigenvalues. We use $V=\{e_1,...,e_n\}$, where $e_i$ is the $i$th unit vector in $\mathbf{R}^n$, i.e., we assume one can choose states in which a control input can be injected.


Figure~\ref{fig:greedy_spread} shows the result of applying the greedy algorithm to maximize the log determinant metric with $n=25$ and $k=7$. This problem is small enough to evaluate every possible 7-element actuator subset, and this result is also shown in a histogram, shifted so that $\min_S f(S) = 0$. The support of $f(S)$ for $|S| = k$, is relatively narrow and close to the optimal value. Hence, the greedy bound is not informative in this case, as $63\%$ of the optimum is lower than the values achieved by all of the size $k$ subsets. Nevertheless, our algorithm finds a good set $S_{greedy}$ scoring
\[\frac{f(S_{greedy})}{f(S_{opt})}\approx99\%\]
of the optimum value $f(S_{opt})$, where $S_{opt}$ is an optimal subset, and is better than 99.93\% of all other $n$ choose $k$ possible choices. We repeated the greedy algorithm for 500 randomly generated stable dynamic matrices and found that in all cases the greedy algorithm returned a selection better than 99.5\% of all possible choices. In other words, for this example the greedy algorithm provides a near-optimal selection and also one that performs much better than the worst case bound.


We also compare for this example the four continuous metrics $\tr(W_S), \tr(W_S^{-1}), \log \det W_S$, and $\lambda_{min}(W_S)$. Figure \ref{fig:greedy_comp} shows the eigenvalues of the Gramians resulting from applying the greedy algorithm with each metric. The results are averages over 10,000 random samples of stable dynamics matrices. For the trace metric, Theorem \ref{contrmod} guarantees that the greedy algorithm finds the globally optimal subset. We can see that this metric tends to focus on making the largest few eigenvalues large at the expense of the smaller eigenvalues. In contrast, the trace of the inverse Gramian and the log determinant strike a compromise, with both resulting in similar eigenvalue distributions. The largest eigenvalues are not as large as the ones achieved by optimizing the trace metric, but they do better on average on the smaller eigenvalues. Although the globally optimal subset is not guaranteed to be found, the submodularity of these metrics guarantees that a near optimal subset is produced by the greedy algorithm, as proved in Theorems \ref{theorem:trace} and \ref{theorem:determinant}. On the other hand, the final metric focuses exclusively on the smallest eigenvalue, but for this example actually does slightly worse on average than the trace of the inverse Gramian on the smallest eigenvalue.  This may result from the fact that the smallest eigenvalue metric is not submodular, and so thus is not always guaranteed to produce a near optimal selection. However, even using the greedy algorithm on this non-submodular metric does not do too much worse than trace of the inverse Gramian and the log determinant on other eigenvalues, and does better than trace of the Gramian on most of the smaller eigenvalues.

\begin{figure}
\centering
\includegraphics[width=\linewidth,height=4.5cm]{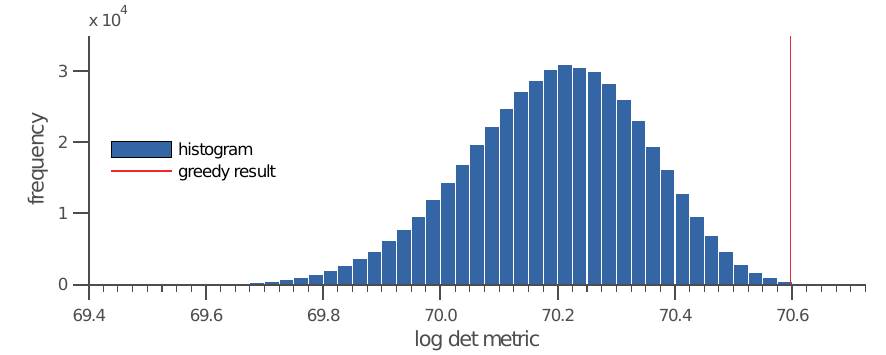}
\caption{\label{fig:greedy_spread}Histogram displaying the shifted log determinant metric for all possible selections of 7 actuators from a set of 25. The result achieved by greedy optimization is displayed by the red line, which is better than 99.93\% of all other selections.}
\end{figure}

\begin{figure}
\centering
\includegraphics[width=\linewidth,height=5cm]{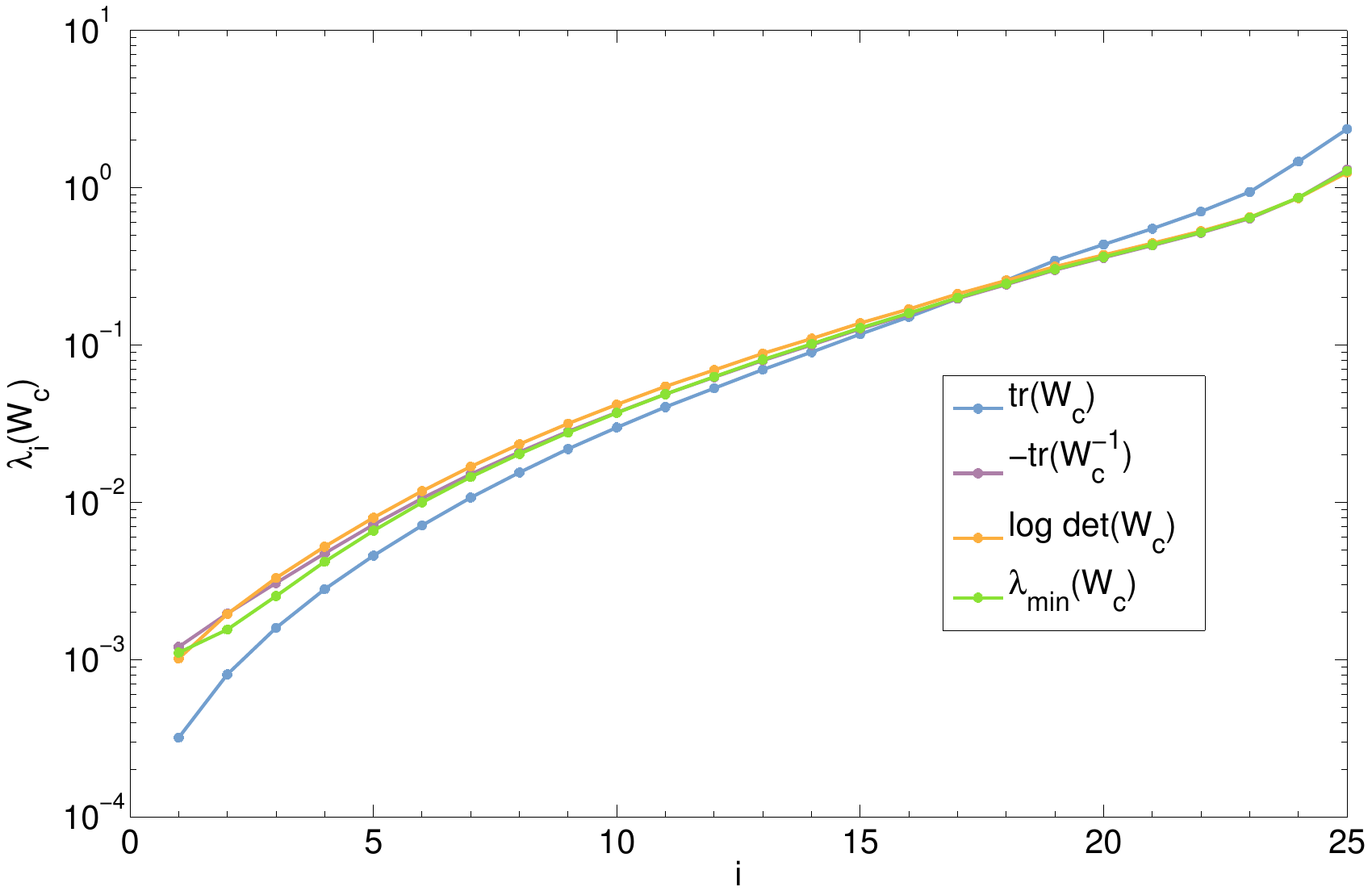}
\caption{\label{fig:greedy_comp} Eigenvalues of the Gramian averaged over 10,000 random samples of stable dynamics matrices for several continuous metrics resulting from applying the greedy algorithm to select 7 actuators from a set of possible 25.}
\end{figure}

%
%

\subsection{Power electronic actuator placement in the European power grid}
Recently developed power electronic devices, such as high voltage direct current (HVDC) links or flexible alternating current transmission devices (FACTS), can be used to improve transient stability properties in power grids by modulating active and reactive power injections to damp frequency oscillations and prevent rotor angle instability \cite{fuchs2011}. In this section, we illustrate our results by using them to place such power electronic actuators in a model of the European power grid. We emphasize that this section is intended to show that there are practical and relevant problems that could be studied using the theory in the preceding sections; however, many important political and economic issues are neglected, and placements are evaluated purely from a controllability perspective. 

We consider a simplified model of the European grid derived from \cite{Haase2006} with 74 buses, each of which is connected to a generator and a constant impedance load. We consider the placement of HVDC links, which are modeled as ideal current sources that can instantaneously inject AC currents into each of their terminal buses; for modeling details see \cite{fuchs2011,fuchs2013a,fuchs2013b}. The system dynamics we consider here are based on the swing equations, a widely-used nonlinear model for the time evolution of rotor angles and frequencies of each generator in the network \cite{kundur1994power}. Each HVDC link has three degrees of freedom that allow influence of the frequency dynamics at the corresponding buses. The nonlinear model is linearized\footnote{Ideally, one would of course want to evaluate actuator placement on the nonlinear model, but even evaluating controllability metrics can be extremely difficult computationally, even for small-scale nonlinear systems. This section is intended to illustrate the theory from the previous section, so we focus on a linearized model, though actuator placement problems for nonlinear networks are an important topic for future work.} about a desired operating condition for each possible HVDC link placement, and the placements are evaluated based on the linearized model using the infinite-horizon controllability Gramian. In principle one can easily work with the finite-horizon Gramian; again we chose to use the infinite-horizon due to the simplicity of its computation, and the results are qualitatively similar when the finite-horizon Gramian is used.

Each generator has two associated states: rotor angle and frequency, which gives a 148-dimensional state space model, i.e., $A \in \mathbf{R}^{148\times 148}$, which always turns out to be stable. Since an HVDC link could be placed in principle between any two distinct nodes in the network, there are 2701 possible locations. Consider the problem of finding the best subset of size 10. This gives approximately $5.6\times 10^{27}$ possible combinations, far too many for a brute force search. 

Figure \ref{grid1} shows the network and the 10 best placements according to the controllability Gramian trace metric with all state space directions weighted equally, i.e., $C = I_{148}$. The best three are relatively long lines connecting the northeast-southwest and northwest-southeast quadrants of the network, respectively. A modal analysis of the dynamics matrix reveals that these choices correlate well with directions associated with lightly damped modes in the rotor angle dynamics. The next group of placements is concentrated in the southeast, indicating that there is room to improve control authority by increasing connectivity in this sparsely connected region. This also indicates a potential weakness in the trace metric, which may cluster actuators to get high controllability in a few of the more controllable directions at the expense of controllability in other directions. Figure \ref{fig:dist} shows the sorted values of the metric, with the top few placements giving a substantial benefit over other placements. 

%

\begin{figure} 
\begin{center} 
\includegraphics[width=\linewidth]{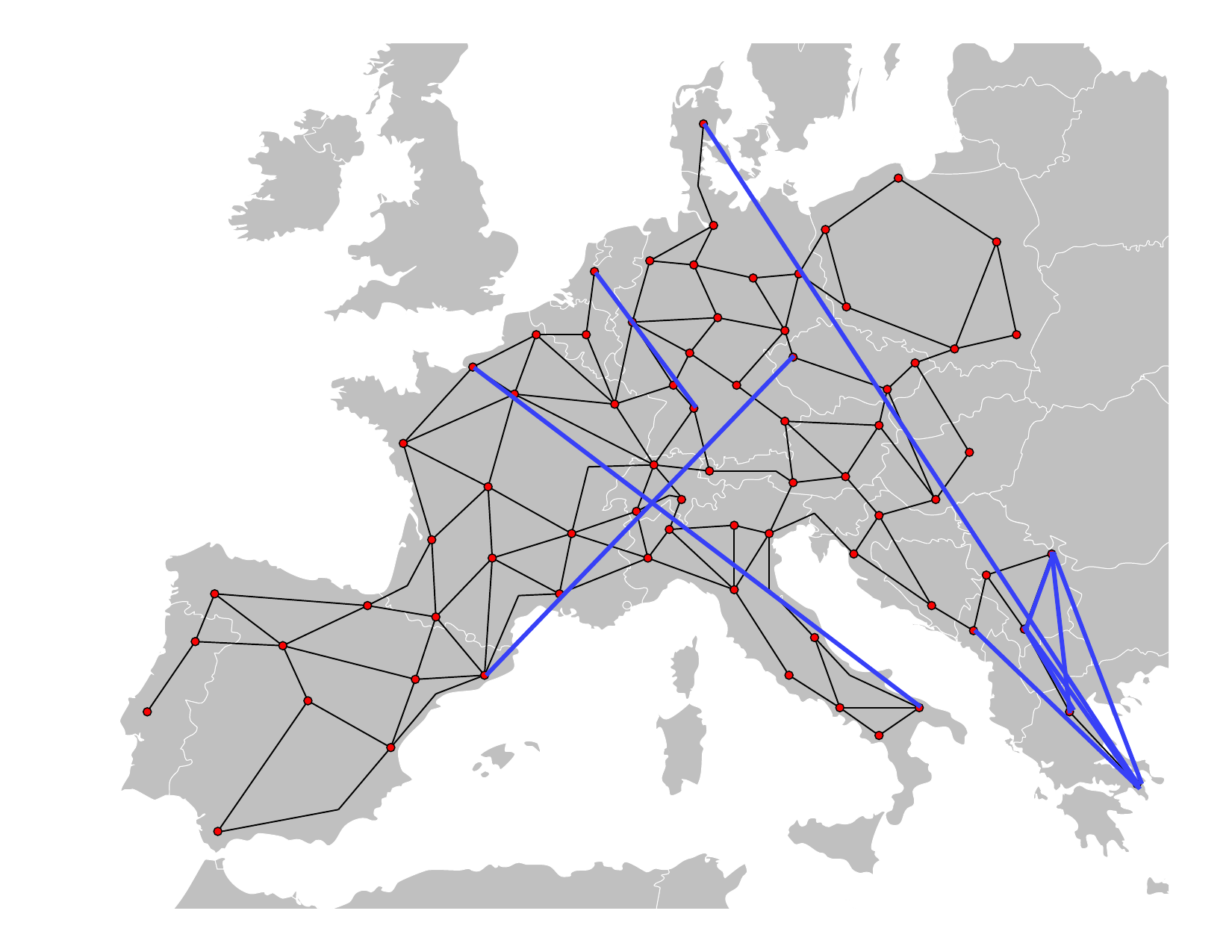}
\caption{Network of the 74-bus European grid model. The red dots show the buses, and the black lines between buses show normal AC transmission lines. The best 10 HVDC line placements according to the controllability Gramian trace metric are shown by the bold blue lines.} \label{grid1}
\end{center}
\end{figure}

\begin{figure} 
\begin{center} 
\resizebox{0.89\linewidth}{!}{\includegraphics{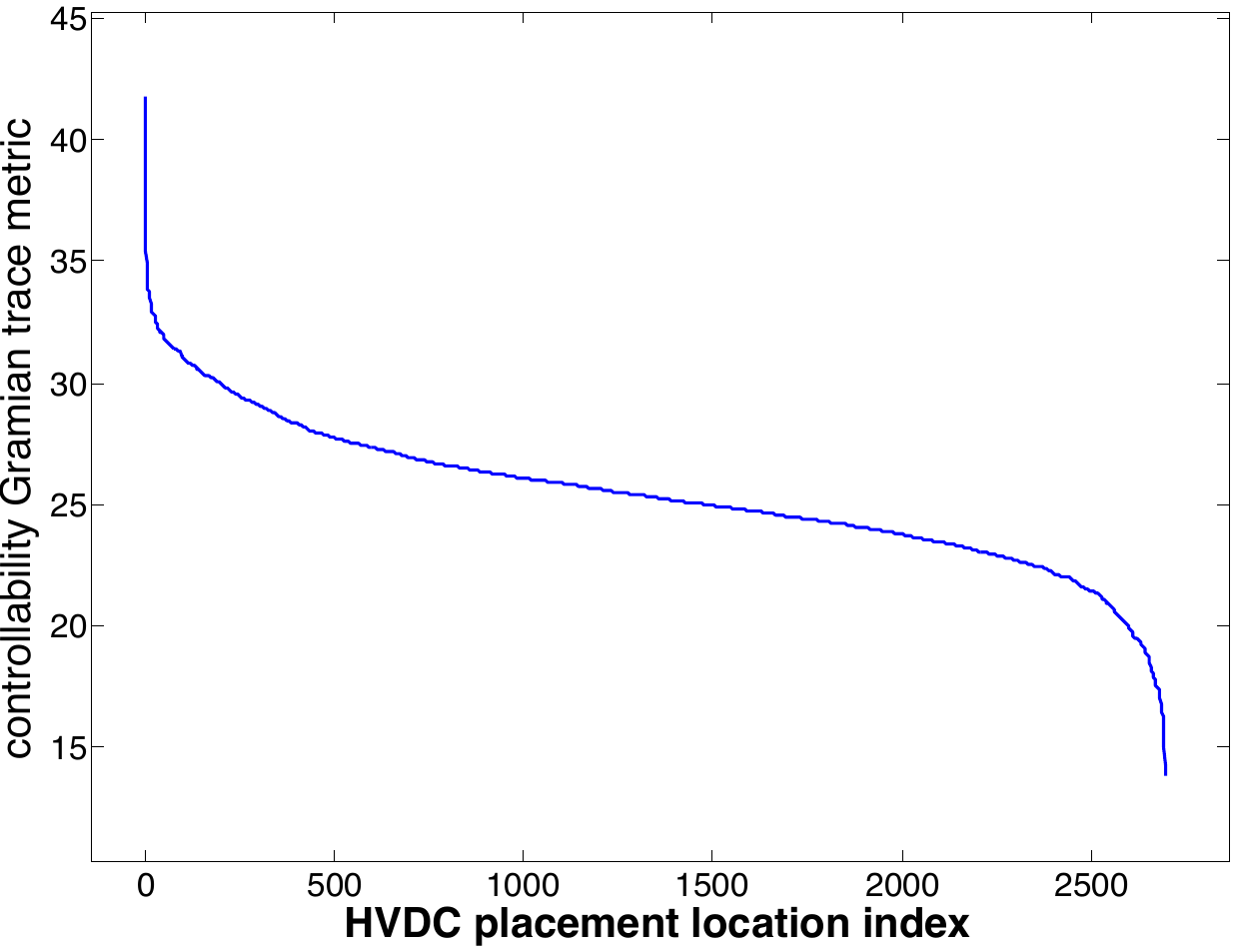}}
\caption{Sorted values of the controllability Gramian trace metric. The vertical axis shows the amount each particular actuator placement adds to the trace of the Gramian. The optimal value is the sum of the first 10 amounts; the top few placement give substantial benefit over other placements.} \label{fig:dist}
\end{center}
\end{figure}

\begin{figure}
        \centering
                \includegraphics[width=\linewidth]{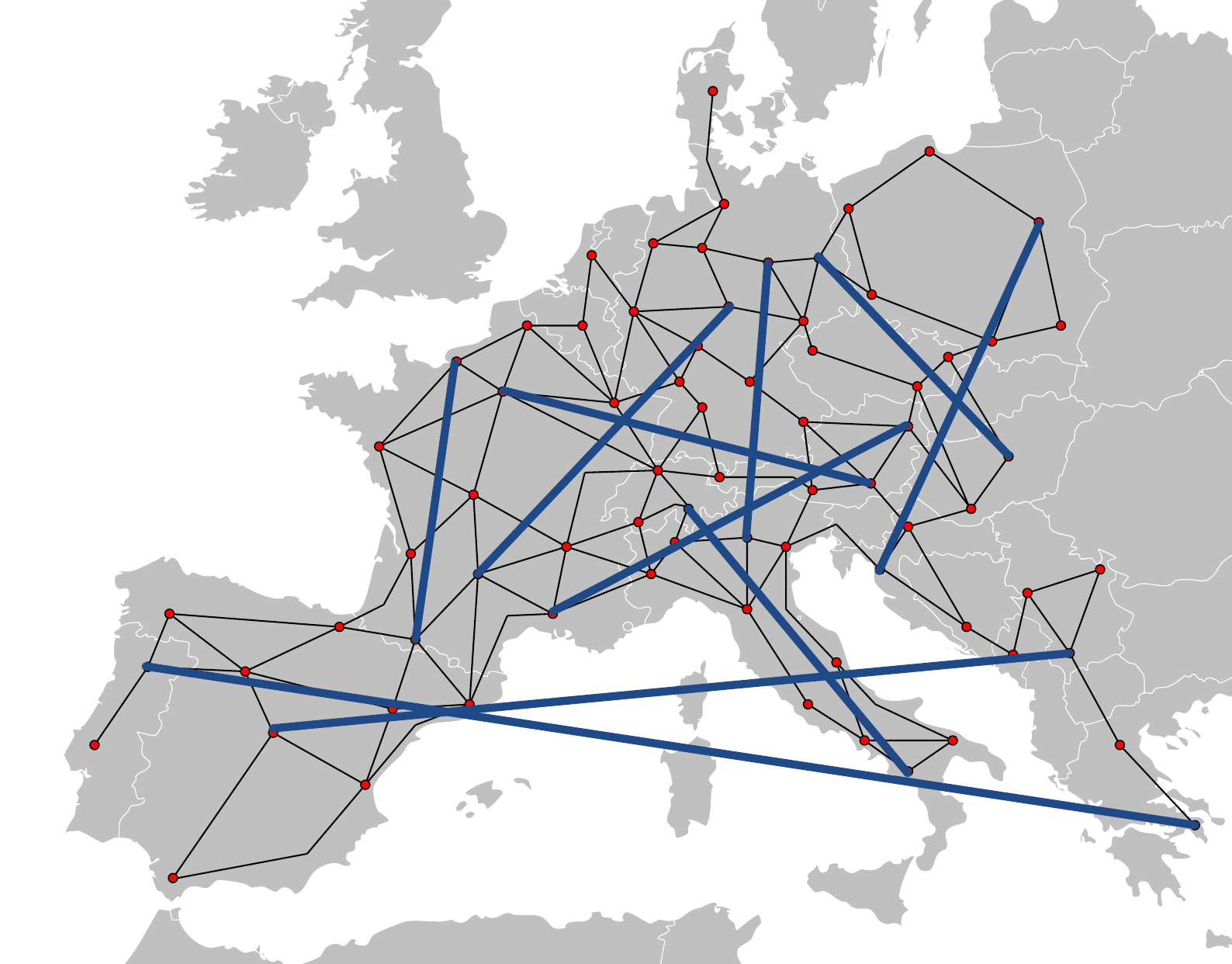}
                \caption{Network of the 74-bus European grid model.. The best 10 HVDC line placements achieved by the greedy algorithm maximizing the log determinant of the controllability Gramian are shown by the bold blue lines.}
                \label{fig:map_res}
\end{figure}

Figure \ref{fig:map_res} shows the placement obtained by using the greedy algorithm with the log determinant metric, using the rank metric until the system becomes controllable. Compared to the trace metric, we see that the lines are in general longer, connecting buses that are further apart, and more evenly distributed in the network, and no node is part of more than one HVDC line. These placements can also be seen to align with directions corresponding to lightly damped modes in the rotor angle dynamics, though with a different distribution across the modes than with the trace. Although both metrics tend to produce placements with a similar qualitative function, the two sets of obtained placements are quite different. 
\section{Conclusions and Outlook}
We have considered optimal actuator placement problems in complex dynamical networks. These problems are in general difficult combinatorial optimization problems; however, we have shown that an important class of metrics related to the controllability Gramians yield modular and submodular set functions. This allows globally optimal or near optimal placements to be obtained with a simple greedy algorithm. By duality, all of the results hold for corresponding sensor placement problems using metrics of the observability Gramian. To our knowledge, this is the first such investigation of submodularity in the context of controllability and observability in dynamical systems. We also defined several dynamic Control Energy Centrality measures, which assigns an importance value to each node in a dynamical network based on its ability to move the system around the state space with a low-energy time-varying control input. The results were illustrated via placement of power electronic actuators in a model of the European power grid. 

There are many open problems involving the structure of combinatorial optimization problems in the optimal placement of sensors and actuators in complex networks. For example, there are many other quantitative metrics of controllability and observability, such as those associated with optimal control and filtering design problems, that may be more appropriate in certain settings. Nothing is known about modularity or submodularity for any other metrics. Further future work involves exploring other case studies in power networks, biological networks, social networks, and discretized models of infinite-dimensional systems. For more complicated system models, such as constrained, nonlinear, hybrid, etc., corresponding controllability questions are much more complicated, and the available tools do not scale well computationally, but one could explore how efficient methods could be used to obtain approximate metrics in these types of systems.  Finally, an important and interesting topic for future work is to investigate how various graphical properties of the network structure affect the actuator placement results, which may lead to insights about energy-related controllability for complex dynamical networks. 


\section*{Acknowledgements}
The authors would like to thank Dr. Alex Fuchs for providing details and helpful discussion about the power system model discussed in Section IV.

\bibliographystyle{plain}  
\bibliography{refs2}

\begin{thebibliography}{10}

\bibitem{bartels1972solution}
R.H. Bartels and G.W. Stewart.
\newblock Solution of the matrix equation ax+ xb= c.
\newblock {\em Communications of the ACM}, 15(9):820--826, 1972.

\bibitem{boykov2001interactive}
Y.Y. Boykov and M.-P. Jolly.
\newblock Interactive graph cuts for optimal boundary \& region segmentation of
  objects in nd images.
\newblock In {\em Proceedings of the Eighth IEEE International Conference on
  Computer Vision}, volume~1, pages 105--112. IEEE, 2001.

\bibitem{callier1991linear}
F.M. Callier and C.A. Desoer.
\newblock {\em Linear system theory}.
\newblock 1991.

\bibitem{chapman2012system}
A.~Chapman and M.~Mesbahi.
\newblock System theoretic aspects of influenced consensus: Single input case.
\newblock {\em IEEE Transactions on Automatic Control}, 57(6):1505--1511, 2012.

\bibitem{chapman2013semi}
A.~Chapman and M.~Mesbahi.
\newblock Semi-autonomous consensus: network measures and adaptive trees.
\newblock {\em IEEE Transactions on Automatic Control}, 58(1):19--31, 2013.

\bibitem{clark2012leader}
A.~Clark, L.~Bushnell, and R.~Poovendran.
\newblock On leader selection for performance and controllability in
  multi-agent systems.
\newblock In {\em IEEE Conference on Decision and Control}, pages 86--93. IEEE,
  2012.

\bibitem{bushnell2014supermodular}
A.~Clark, L.~Bushnell, and R.~Poovendran.
\newblock A supermodular optimization framework for leader selection under link
  noise in linear multi-agent systems.
\newblock {\em IEEE Transactions on Automatic Control}, 59(2):283--296, 2014.

\bibitem{clark2011submodular}
A.~Clark and R.~Poovendran.
\newblock A submodular optimization framework for leader selection in linear
  multi-agent systems.
\newblock In {\em IEEE Conference on Decision and Control}, pages 3614--3621.
  IEEE, 2011.

\bibitem{cortesi2014submodularity}
F.L. Cortesi, T.H. Summers, and J.~Lygeros.
\newblock Submodularity of energy related controllability metrics.
\newblock In {\em IEEE Conference on Decision and Control, Los Angeles, CA,
  USA}, 2014.

\bibitem{cowan2012nodal}
N.J. Cowan, E.J. Chastain, D.A. Vilhena, J.S. Freudenberg, and C.T. Bergstrom.
\newblock Nodal dynamics, not degree distributions, determine the structural
  controllability of complex networks.
\newblock {\em PLOS ONE}, 7(6):e38398, 2012.

\bibitem{feige1998threshold}
U.~Feige.
\newblock A threshold of ln n for approximating set cover.
\newblock {\em Journal of the ACM}, 45(4):634--652, 1998.

\bibitem{fuchs2011}
A.~Fuchs and M.~Morari.
\newblock Grid stabilization through {VSC-HVDC} using wide area measurements.
\newblock In {\em IEEE PowerTech Conference, Trondheim, Norway}. IEEE, 2011.

\bibitem{fuchs2013a}
A.~Fuchs and M.~Morari.
\newblock Actuator performance evaluation using {LMIs} for optimal {HVDC}
  placement.
\newblock In {\em European Control Conference, Zurich, Switzerland}. IEEE,
  2013.

\bibitem{fuchs2013b}
A.~Fuchs and M.~Morari.
\newblock Placement of {HVDC} links for power grid stabilization during
  transients.
\newblock In {\em IEEE PowerTech Conference, Grenoble, France}. IEEE, 2013.

\bibitem{hammarling1982numerical}
S.J. Hammarling.
\newblock Numerical solution of the stable, non-negative definite {Lyapunov}
  equation.
\newblock {\em IMA Journal of Numerical Analysis}, 2(3):303--323, 1982.

\bibitem{Haase2006}
T.~Hasse.
\newblock {\em Anforderung an eine durch Erneuerbare Energien gepr\"{a}geigen
  Energieversorgung - Untersuchung des Regelverhaltens von Kraftwerken und
  Verbundnetzen}.
\newblock PhD thesis, University of Rostock, 2006.

\bibitem{kailath1980linear}
T.~Kailath.
\newblock {\em Linear systems}, volume~1.
\newblock Prentice-Hall Englewood Cliffs, NJ, 1980.

\bibitem{kalman1959general}
R.E. Kalman.
\newblock On the general theory of control systems.
\newblock {\em IRE Transactions on Automatic Control}, 4(3):110--110, 1959.

\bibitem{kalman1960contributions}
R.E. Kalman.
\newblock Contributions to the theory of optimal control.
\newblock {\em Boletin de la Sociedad Matematica Mexicana}, 5(1):102--119,
  1960.

\bibitem{kempe2003maximizing}
D.~Kempe, J.~Kleinberg, and {\'E}.~Tardos.
\newblock Maximizing the spread of influence through a social network.
\newblock In {\em Proceedings of the Ninth {ACM SIGKDD} International
  Conference on Knowledge Discovery and Data Mining}, pages 137--146. ACM,
  2003.

\bibitem{krause2012submodular}
A.~Krause and D.~Golovin.
\newblock Submodular function maximization.
\newblock {\em Tractability: Practical Approaches to Hard Problems}, 3, 2012.

\bibitem{krause2008near}
A.~Krause, A.~Singh, and C.~Guestrin.
\newblock Near-optimal sensor placements in gaussian processes: Theory,
  efficient algorithms and empirical studies.
\newblock {\em The Journal of Machine Learning Research}, 9:235--284, 2008.

\bibitem{kundur1994power}
P.~Kundur.
\newblock {\em Power system stability and control}.
\newblock McGraw-Hill, Inc., 1993.

\bibitem{li2002low}
J.-R. Li and J.~White.
\newblock Low rank solution of lyapunov equations.
\newblock {\em SIAM Journal on Matrix Analysis and Applications},
  24(1):260--280, 2002.

\bibitem{lin1974structural}
C.-T. Lin.
\newblock Structural controllability.
\newblock {\em IEEE Transactions on Automatic Control}, 19(3):201--208, 1974.

\bibitem{liu2011controllability}
Y.-Y. Liu, J.-J. Slotine, and A.-L. Barab{\'a}si.
\newblock Controllability of complex networks.
\newblock {\em Nature}, 473(7346):167--173, 2011.

\bibitem{lovasz1983submodular}
L.~Lov{\'a}sz.
\newblock Submodular functions and convexity.
\newblock {\em Mathematical programming: the state of the art}, pages 235--257,
  1983.

\bibitem{minoux1978accelerated}
M.~Minoux.
\newblock Accelerated greedy algorithms for maximizing submodular set
  functions.
\newblock In {\em Optimization Techniques}, pages 234--243. Springer, 1978.

\bibitem{muller2011few}
F.J. M{\"u}ller and A.~Schuppert.
\newblock Few inputs can reprogram biological networks.
\newblock {\em Nature}, 478(7369):E4--E4, 2011.

\bibitem{greedybound}
G.L. Nemhauser, L.A. Wolsey, and M.L. Fisher.
\newblock An analysis of approximations for maximizing submodular set
  functions---{I}.
\newblock {\em Mathematical Programming}, 14(1):265--294, 1978.

\bibitem{nepusz2012controlling}
T.~Nepusz and T.~Vicsek.
\newblock Controlling edge dynamics in complex networks.
\newblock {\em Nature Physics}, 8(7):568--573, 2012.

\bibitem{newman2010networks}
M.~Newman.
\newblock {\em Networks: an introduction}.
\newblock Oxford University Press, 2010.

\bibitem{olshevsky2014minimal}
A.~Olshevsky.
\newblock Minimal controllability problems.
\newblock {\em IEEE Transactions on Control of Network Systems}, 1(3):249--258,
  2014.

\bibitem{pasqualetti2014controllability}
F.~Pasqualetti, S.~Zampieri, and F.~Bullo.
\newblock Controllability metrics, limitations and algorithms for complex
  networks.
\newblock {\em IEEE Transactions on Control of Network Systems}, 1(1):40--52,
  2014.

\bibitem{petersen2008matrix}
K.B. Petersen and M.S. Pedersen.
\newblock {\em The matrix cookbook}.
\newblock Technical University of Denmark, 2008.

\bibitem{rahmani2009controllability}
A.~Rahmani, M.~Ji, M.~Mesbahi, and M.~Egerstedt.
\newblock Controllability of multi-agent systems from a graph-theoretic
  perspective.
\newblock {\em SIAM Journal on Control and Optimization}, 48(1):162--186, 2009.

\bibitem{rajapakse2011dynamics}
I.~Rajapakse, M.~Groudine, and M.~Mesbahi.
\newblock Dynamics and control of state-dependent networks for probing genomic
  organization.
\newblock {\em Proceedings of the National Academy of Sciences},
  108(42):17257--17262, 2011.

\bibitem{shames2014rigid}
I.~Shames and T.H. Summers.
\newblock Rigid network design via submodular set function optimization.
\newblock {\em submitted to IEEE Transactions on Network Science and
  Engineering}, 2014.

\bibitem{sorrentino2007controllability}
F.~Sorrentino, M.~di~Bernardo, F.~Garofalo, and G.~Chen.
\newblock Controllability of complex networks via pinning.
\newblock {\em Physical Review-Series E}, 75(4):046103, 2007.

\bibitem{summers2013optimal}
T.H. Summers and J.~Lygeros.
\newblock Optimal sensor and actuator placement in complex dynamical networks.
\newblock In {\em IFAC World Congress, Cape Town, South Africa}, pages
  3784--3789, 2014.

\bibitem{summers2014topology}
T.H. Summers, I.~Shames, J.~Lygeros, and F.~D{\"o}rfler.
\newblock Topology design for optimal network coherence.
\newblock In {\em European Control Conference}, 2015.

\bibitem{sun2013controllability}
J.~Sun and A.E. Motter.
\newblock Controllability transition and nonlocality in network control.
\newblock {\em Physical Review Letters}, 110(20):208701, 2013.

\bibitem{tang2012identifying}
Y.~Tang, H.~Gao, W.~Zou, and J.~Kurths.
\newblock Identifying controlling nodes in neuronal networks in different
  scales.
\newblock {\em PLOS ONE}, 7(7):e41375, 2012.

\bibitem{van2001review}
M.~Van De~Wal and B.~De~Jager.
\newblock A review of methods for input/output selection.
\newblock {\em Automatica}, 37(4):487--510, 2001.

\bibitem{wang2012optimizing}
W.-X. Wang, X.~Ni, Y.-C. Lai, and C.~Grebogi.
\newblock Optimizing controllability of complex networks by minimum structural
  perturbations.
\newblock {\em Physical Review E}, 85(2):026115, 2012.

\bibitem{yan2012controlling}
G.~Yan, J.~Ren, Y.-C. Lai, C.-H. Lai, and B.~Li.
\newblock Controlling complex networks: How much energy is needed?
\newblock {\em Physical Review Letters}, 108(21):218703, 2012.

\bibitem{tang2012epc}
T.~Yang, G.~Huijun, J.~Kurths, and F.~{Jian-an}.
\newblock Evolutionary pinning control and its application in {UAV}
  coordination.
\newblock {\em Industrial Informatics, IEEE Transactions on}, 8(4):828--838,
  2012.

\bibitem{balancedgramian}
K.~Zhou, G.~Salomon, and E.~Wu.
\newblock Balanced realization and model reduction for unstable systems.
\newblock {\em International Journal of Robust and Nonlinear Control},
  9(3):183--198, 1999.

\end{thebibliography}

\end{document}